\newtheorem{theorem}{Theorem}[section]
\newtheorem*{theorem*}{Theorem}
\newtheorem*{main*}{Main Theorem}
\newtheorem{claim}{Claim}[section]
\newtheorem{corollary}{Corollary}[section]
\newtheorem{lemma}{Lemma}[section]
\newtheorem{remark}{Remark}[section]
\newtheorem{definition}{Definition}[section]
\theoremstyle{definition}
\newcommand{\beql}[1]{\begin{equation}\label{#1}}
\newcommand{\eeq}{\end{equation}}
\newcommand{\Ds}{\displaystyle}
\newcommand{\Abs}[1]{{\left|{#1}\right|}}
\newcommand{\Set}[1]{{\left\{{#1}\right\}}}
\newcommand{\ba}{\mathbf{a}}
\newcommand{\bb}{\mathbf{b}}
\newcommand{\bc}{\mathbf{c}}
\newcommand{\be}{\mathbf{e}}
\newcommand{\Bf}{\mathbf{f}}
\newcommand{\bg}{\mathbf{g}}
\newcommand{\bh}{\mathbf{h}}
\newcommand{\bu}{\mathbf{u}}
\newcommand{\bv}{\mathbf{v}}
\newcommand{\bw}{\mathbf{w}}
\newcommand{\bx}{\mathbf{x}}
\newcommand{\bZ}{{\mathsf{Z}}}
\newcommand{\btau}{{\vec{\tau}}}
\newcommand{\bbH}{{\mathbb{H}}}
\newcommand{\R}{{\mathbb{R}}}
\newcommand{\Z}{{\mathbb{Z}}}
\newcommand{\one}{{\bf 1}}
\newcommand{\inner}[2]{{\langle #1, #2 \rangle}}
\newcommand{\dens}{{\rm dens\,}}
\newcommand{\supp}{{\rm supp\,}}
\newcommand{\vol}{{\rm vol\,}}
\newcommand{\ft}[1]{\widehat{#1}}
\newcommand{\fzero}[1]{{{\mathsf Z}\left({\ft{{#1}}}\right)}}
\renewcommand{\vec}[1]{{\mbox{\boldmath$#1$}}}
\newcommand{\diam}{{\rm diam\,}}
\newcommand{\bxi}{{\vec{\xi}}}
\newcounter{rem}
\newcounter{step}
\newcounter{mysec}
\newcounter{mysubsec}[mysec]
\newcounter{othm}
\def\theothm{\Alph{othm}}
\newcounter{fcap}
\begin{document}

\title{Structure results for multiple tilings in 3D}

\author[N.\,Gravin]{Nick Gravin}

\author[M.\,Kolountzakis]{Mihail N. Kolountzakis}
\address{M. \,K.: Department of Mathematics, University of Crete, Knossos Ave., GR-714 09, Iraklio, Greece}
\email{kolount@math.uoc.gr}
\thanks{M. \,K.: Supported by research grant No 3223 from the Univ.\ of Crete and in part by the Singapore MOE Tier 2 research grant MOE2011-T2-1-090}

\author[S.\,Robins]{Sinai Robins}
\thanks{N.\,G., S.\,R., D.\,S.:  Supported in part by the Singapore MOE Tier 2 research grant MOE2011-T2-1-090}
\author[D.\,Shiryaev]{Dmitry Shiryaev}
\address{N.\,G., S.\,R., D.\,S.: Division of Mathematical Sciences, Nanyang Technological University \\
SPMS, MAS-03-01, 21 Nanyang Link, Singapore 637371}
\email{ngravin@pmail.ntu.edu.sg, rsinai@ntu.edu.sg, shir0010@ntu.edu.sg}
\date{\today}
\thanks{{\bf Keywords:} Multiple tilings; Tilings; Lattices; Fourier Transform; Zonotope; Quasi-periodicity}


\maketitle


\section{Introduction}

The study of multiple tilings of Euclidean space began in 1936,
when the famous Minkowski facet-to-facet conjecture \cite{minkowski1907diophantische}
for classical tilings was extended to the setting of
$k$-tilings  with the unit cube, by Furtw\"angler~\cite{Furtwangler}.
Minkowski's facet-to-facet conjecture states that for any {\em lattice} tiling of $\R^d$
by translations of the unit cube, there exist at least two translated cubes that share a facet
(face of co-dimension 1).
This conjecture was strengthened by Keller \cite{keller-conjecture}
who conjectured the same conclusion for {\em any} cube tiling,
not just lattice tilings.
It was also strengthened in a different direction by Furtw\"angler~\cite{Furtwangler} who, again, conjectured the
same conclusion for any {\em multiple} lattice tiling.

To define a multiple tiling, suppose we translate a convex body $P$ with a discrete multiset $\Lambda$,
in such a way that each point of $\R^d$ gets covered exactly $k$ times,
except perhaps the translated copies of the boundary of $P$.
We then call such a body a $k$-tiler, and such an action has been given
the following names in the literature:
a  {\bf $k$-tiling},  a  {\bf tiling at level $k$},  a  {\bf tiling with multiplicity $k$},
and sometimes simply a {\bf multiple tiling}.
We may use any of these synonyms here,
and we immediately point out, for polytopes $P$,
a trivial but useful algebraic equivalence for a tiling at level $k$:
\begin{equation}\label{k.tiling.def}
\sum_{\lambda \in \Lambda}   \one_{P+\lambda}(x) = k,
\end{equation}
for almost all $x \in \R^d$, where $\one_{P}$ is the indicator function of the polytope $P$.

Furtw\"angler's conjecture was disproved by Haj\'os \cite{hajos1938} for dimension larger than 3 and
for $k \ge 9$
while Furtw\"angler himself \cite{Furtwangler} proved it for dimension at most 3.
Haj\'os \cite{Hajos} also proved Minkowski's conjecture in all dimensions.
The ideas of Furtw\"angler were subsequently
extended (but still restricted to cubes) by the important work of
Perron~\cite{perron1940},
Robinson~\cite{Robinson}, Szab\'o \cite{szabo1982multiple}, Gordon~\cite{Gordon}
and Lagarias and Shor~\cite{lagarias1992keller}.
These authors showed that for some levels $k$ and dimensions $d$ and under the lattice assumption as well as not,
a facet-to-facet conclusion for $k$-tilings is true in $\R^d$,
while for most values of $k$ and $d$ it is false.

There is a vast literature on the study of coverings of Euclidean space by a convex body,
and an equally vast body of work on classical tilings by translations of one convex body,
which must necessarily be a polytope (see for example \cite{Erdos61, Gritzmann}).
On the one hand, when we consider a $k$-tiling polytope $P$,
we obtain an exact covering of $\R^d$, in the sense that almost every point of $\R^d$
gets covered exactly $k$ times.   On the other hand, the family of $k$-tilers
is much larger than the family of $1$-tilers.
Hence the study of $k$-tilings lies somewhere between coverings and $1$-tilings.

It was known to Bolle~\cite{Bolle} that in $\R^2$,
every $k$-tiling convex polytope has to be a centrally symmetric
polygon, and  using combinatorial methods Bolle~\cite{Bolle} gave a
characterization for all polygons in $\R^2$ that admit a $k$-tiling with a {\it lattice} $\Lambda$
of translation vectors.
Kolountzakis~\cite{Kolountzakis04} proved that if a convex polygon $P$ tiles $\R^2$ multiply
with {\it any} discrete multiset $\Lambda$, then $\Lambda$ must
be  a {\it finite union of two-dimensional lattices}.
The ingredients of Kolountzakis' proof include the idempotent theorem for the Fourier transform of a measure.
Roughly speaking, the idempotent theorem of Meyer~\cite{meyer1970nombres}
tells us that if the square of the Fourier transform of a measure is itself,
then the support of the measure is contained in a finite union of lattices.
To put our main result into its proper context,
we record here the precise result of Kolountzakis.
A multiple tiling is called {\bf quasi-periodic} if its multiset
of discrete translation vectors $\Lambda$ is a finite union of translated lattices, not necessarily all of the same dimension.

\begin{theorem*}[Kolountzakis, 2002~\cite{kolountzakis2000structure}]
Suppose that $K$ is a symmetric convex polygon which is not a parallelogram.
Then $K$ admits only quasi-periodic multiple tilings if any.
\end{theorem*}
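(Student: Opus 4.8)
The plan is to Fourier--transform the tiling identity~\eqref{k.tiling.def} and read the structure of $\Lambda$ off the spectrum of its counting measure. Put $\mu=\sum_{\lambda\in\Lambda}\delta_\lambda$; then \eqref{k.tiling.def} says $\mu*\one_K\equiv k$, so as tempered distributions
\[
\widehat{\mu}\cdot\widehat{\one_K}=k\,\delta_0 .
\]
Since $K$ has nonempty interior and $\mu*\one_K$ is bounded, $\Lambda$ is uniformly discrete and $\mu$ is a translation--bounded positive measure; using $\mu\ge 0$, translation--boundedness and $\widehat{\one_K}(0)=|K|>0$ one checks by a standard argument that $\widehat{\mu}$ is itself a translation--bounded, positive--definite measure. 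The displayed identity then forces
\[
\supp\widehat{\mu}\ \subseteq\ \{0\}\cup Z(\widehat{\one_K}),\qquad Z(\widehat{\one_K}):=\{\xi\neq 0:\widehat{\one_K}(\xi)=0\}.
\]
So the problem splits into: (a) describing $Z(\widehat{\one_K})$ for a symmetric convex polygon that is not a parallelogram; and (b) showing that a translation--bounded measure supported on such a set must live on a finite union of translated lattices. Dualizing (b) through the Fourier transform then gives the same conclusion for $\Lambda$.

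For (a) I would apply the divergence theorem to write, for $\xi$ orthogonal to no edge,
\[
\widehat{\one_K}(\xi)=\frac{1}{(2\pi i)^2\,|\xi|^2}\sum_{v}c_v(\xi)\,e^{-2\pi i\langle v,\xi\rangle},
\]
the sum over the vertices $v$ of $K$, where each $c_v$ (built from the two edge directions and outer normals at $v$) is homogeneous of degree $0$. On a ray $\{tw:t>0\}$ this is $t^{-2}$ times a trigonometric polynomial in $t$ with frequencies $\langle v,w\rangle$; on the finitely many edge--normal rays $w=\pm\eta_j$ one has instead $\widehat{\one_K}(tw)=\gamma_j\,t^{-1}\sin(2\pi h_j t)+O(t^{-2})$, where $h_j>0$ is the support number of $K$ in direction $\eta_j$ and $\gamma_j$ is a nonzero multiple of the length of the corresponding edge. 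The geometric heart of step (a) is the lemma: \emph{a symmetric convex polygon $K$ is a parallelogram if and only if $\widehat{\one_K}$ vanishes identically on some affine line.} One direction is immediate: for a parallelogram $\widehat{\one_K}$ factors (after an affine change of variables) into two one--dimensional sinc factors, so its zero set contains two families of infinitely many parallel lines. For the converse one shows that the trigonometric polynomial above can vanish identically along a line only if several of its frequencies collide, and that the resulting algebraic conditions on $w$ are satisfiable only when the edges of $K$ come in exactly two antipodal pairs; the edge--normal rays are excluded directly by the asymptotics above. Hence, for $K$ not a parallelogram, $Z(\widehat{\one_K})$ has Lebesgue measure zero, contains no affine line, and away from the finitely many lines $\R\eta_j$ it is a countable union of real--analytic arcs containing no line segment.

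Step (b) is where the hypothesis is really used. The absolutely continuous part of $\widehat{\mu}$ sits on the null set $Z(\widehat{\one_K})$ and so vanishes. A translation--bounded measure can perfectly well be carried by a straight line --- this is exactly what the lower--dimensional lattice pieces of a quasi--periodic $\Lambda$ produce --- but \emph{no} line lies inside $Z(\widehat{\one_K})$, so one must rule out that $\widehat{\mu}$ has a singular--continuous (necessarily curved, one--dimensional) part. This is the analytic crux: translation--boundedness alone does not forbid such a part, so one has to genuinely exploit both that $\Lambda$ is uniformly discrete and that $Z(\widehat{\one_K})$ is curved away from the directions $\eta_j$, in order to conclude that $\widehat{\mu}$ is a pure point measure on a uniformly discrete set $S\subseteq\{0\}\cup Z(\widehat{\one_K})$. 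Once $\widehat{\mu}$ is pure point one produces from it a measure whose Fourier transform squares to itself and applies the idempotent theorem of Meyer recalled above: the support is then forced into the coset ring of $\R^2$, i.e.\ into a finite union of translated lattices. Carrying this back through the Fourier transform shows $\Lambda$ is a finite union of translated lattices (of dimension $0$, $1$ or $2$), so the multiple tiling is quasi--periodic.

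I expect the real difficulty to lie in two places: the geometric lemma of step (a) --- pinning down exactly when $\widehat{\one_K}$ can vanish along an entire line, and more generally controlling $Z(\widehat{\one_K})$ off the edge--normal directions --- and, in step (b), the analytic passage from ``$\widehat{\mu}$ is a translation--bounded measure supported on $\{0\}\cup Z(\widehat{\one_K})$'' to a configuration where the idempotent theorem applies (in particular, the ruling out of a singular--continuous part of $\widehat{\mu}$), which is exactly the point at which the non--parallelogram hypothesis --- equivalently, that $Z(\widehat{\one_K})$ contains no line --- is indispensable.
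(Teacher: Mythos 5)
Your overall skeleton --- Fourier--transform the tiling identity, locate $\supp\widehat{\delta_\Lambda}$ inside a zero set, and finish with Meyer's idempotent theorem (Theorem~\ref{th:2d}) --- is the right one, but you work with the wrong zero set, and this creates a gap you yourself flag but do not close. From $\widehat{\mu}\cdot\widehat{\one_K}=k\delta_0$ you only get $\supp\widehat{\mu}\subseteq\{0\}\cup\bZ(\widehat{\one_K})$, and $\bZ(\widehat{\one_K})$ is a one--dimensional union of real--analytic arcs. Even granting your (nontrivial, unproved) geometric lemma that this set contains no affine line when $K$ is not a parallelogram, you are left having to show that a translation--bounded positive--definite measure carried by such a curved null set has no singular--continuous part. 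You correctly identify this as ``the analytic crux'' and offer no argument for it; there is no standard argument, and nothing in translation--boundedness or positive--definiteness forbids mass on a curve. Since Theorem~\ref{th:2d} needs $\supp\widehat{\delta_\Lambda}$ to be genuinely \emph{discrete} (and of $O(R^d)$ growth), the proof does not go through as written.

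The actual proof avoids this entirely by extracting more information from the tiling than the single equation $\mu*\one_K=k$. For each pair of opposite parallel edges $e$, $e+\btau$ of $K$ one forms the signed boundary measure (arc--length on $e$ minus arc--length on $e+\btau$ --- the two--dimensional analogue of the leg measures of Definition~\ref{def:4legs}); the tiling forces this measure to convolve with $\delta_\Lambda$ to zero, so $\supp\widehat{\delta_\Lambda}\subseteq\{0\}\cup\bigl(\bbH_{_{-0}}(\be)\cup\bbH(\btau)\bigr)$ for \emph{every} such pair, exactly as in Lemma~\ref{lm:ft-zeroes-edges} and Theorem~\ref{th:intersection}. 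Each of these zero sets is an explicit countable union of parallel lines, and intersecting over all edge pairs reduces discreteness of the support to a finite combinatorial check on edge directions (the intersection property \eqref{intersection-property}); it is here, not in an analysis of $\bZ(\widehat{\one_K})$, that the non--parallelogram hypothesis enters, simply because a symmetric polygon that is not a parallelogram has at least three pairwise non--parallel edge pairs. The resulting support is automatically a subset of a finite union of discrete groups, giving the $O(R^d)$ count for free, after which Theorem~\ref{th:2d} applies. If you want to repair your proposal, replace your step (a)--(b) with this leg--measure argument; the direct study of $\bZ(\widehat{\one_K})$ is both harder and insufficient.
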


Here we extend this result to $\R^3$,
and we also find a fascinating class of polytopes analogous
to the parallelogram of the theorem above.
To describe this class, we first recall the definition of a {\bf zonotope},
which is the Minkowski sum of  a finite number of line segments.
In other words, a zonotope equals a translate of
$[-\bv_1, \bv_1] + \cdots + [-\bv_N, \bv_N]$, for some
positive integer $N$ and vectors $\bv_1,\ldots,\bv_N \in \R^d$.
A zonotope may equivalently be defined as  the projection of some $l$-dimensional cube.
A third equivalent condition is that for a $d$-dimensional zonotope, all of its
$k$-dimensional faces are centrally symmetric, for $1 \leq k \leq d$.
For example, the zonotopes in $\R^2$ are the centrally symmetric polygons.

We shall say that a polytope $P\subseteq\R^3$
is a  {\bf two-flat  zonotope} if $P$ is the Minkowski sum of $n+m$ line segments
which lie in the union of two different two-dimensional subspaces $H_1$ and $H_2$.
In other words, $H_1$ contains $n$ of the segments and $H_2$ contains $m$ of the
segments (if one of the segments belongs to both $H_1$ and $H_2$ we list it twice, once for each plane).
Equivalently, $P$ may be thought of as
the Minkowski sum of two $2$-dimensional symmetric polygons
one of which may degenerate into a single line segment.

Recently, a structure theorem for  convex $k$-tilers in $\R^d$ was found, and is as follows.
\begin{theorem*}[Gravin, Robins, Shiryaev 2011~\cite{gravin2011translational}]
If a convex polytope $k$-tiles $\R^d$ by translations,
then it is centrally symmetric and its facets are centrally symmetric.
\end{theorem*}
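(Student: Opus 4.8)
The plan is to argue by Fourier analysis, proving first that $P$ is centrally symmetric and then, by induction on the dimension, that each facet is centrally symmetric. Write $\mu=\sum_{\lambda\in\Lambda}\delta_\lambda$; by \eqref{k.tiling.def} the tiling identity is $\one_P*\mu=k$ a.e., which on the Fourier side reads $\ft{\one_P}\cdot\ft{\mu}=k\,\delta_0$ as tempered distributions, so $\ft{\mu}$ is supported on $\{0\}\cup Z$ with $Z=\fzero{\one_P}\setminus\{0\}$, the zero set of the entire function $\ft{\one_P}$. I use throughout that $\mu$ is translation bounded (uniformly discrete support of bounded density, since $P$ has nonempty interior) and of positive lower density (the covering is exactly $k$-fold).

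For the central symmetry of $P$, fix a facet normal $u$ and differentiate $\one_P*\mu\equiv k$ in the direction $u$. Since $\one_P$ is of bounded variation, $D_u\one_P=-\sum_{F}\inner{n_F}{u}\,\sigma_F$, the sum over the facets $F$ of $P$ with outer normal $n_F$ and surface measure $\sigma_F$; hence $\sum_{F,\lambda}\inner{n_F}{u}\,\sigma_F(\cdot-\lambda)=0$ as a locally finite signed measure. I split this into the contribution of the facets $F_u,F_{-u}$ normal to $\pm u$ — namely $\sum_\lambda\bigl(\sigma_{F_u}(\cdot-\lambda)-\sigma_{F_{-u}}(\cdot-\lambda)\bigr)$, supported on hyperplanes orthogonal to $u$ — and the rest; after multiplying by a large smooth cutoff in the $u^\perp$ directions (so that only finitely many $\lambda$ contribute in any $u$-range) and pushing forward under $x\mapsto\inner{x}{u}$, the first part becomes a purely atomic measure on $\R$ while the second becomes absolutely continuous (the other facets are transversal to $u$). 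Uniqueness of the Lebesgue decomposition forces the atomic part to vanish by itself, which gives $\vol(F_u)\,m=\vol(F_{-u})\,m(\cdot+w)$, where $w$ is the width of $P$ in direction $u$ and $m$ is the (localized) push-forward of $\mu$ along $x\mapsto\inner{x}{u}$ — a nonzero positive measure of at most linear growth. A positive measure satisfying such a multiplicative translation relation grows geometrically unless the ratio equals $1$, so $\vol(F_u)=\vol(F_{-u})$ and $-u$ is again a facet normal. Therefore the surface-area measure $S_P=\sum_u\vol(F_u)\,\delta_u$ is invariant under $u\mapsto-u$, i.e.\ $S_P=S_{-P}$, and by Minkowski's uniqueness theorem (a convex polytope is determined, up to translation, by its surface-area measure) $P=-P+c$: $P$ is centrally symmetric.

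For the facets I induct on $d$. The case $d=1$ is trivial, and $d=2$ is Bolle's theorem quoted above: a convex polygon that $k$-tiles $\R^2$ is centrally symmetric (and its edges, being segments, are trivially centrally symmetric). For $d\ge 3$, fix a facet $F_u$; it suffices to show that $F_u$ multiply tiles its affine hull $u^\perp\cong\R^{d-1}$, for then the inductive hypothesis in dimension $d-1$ makes $F_u$ centrally symmetric. To extract this lower-dimensional tiling one uses the boundary asymptotics of $\ft{\one_P}$ along rays through a facet normal: regarding the facets $F_u,F_{-u}$ as polytopes in $u^\perp$,
\[
\ft{\one_P}(tu+\eta)=\frac{1}{2\pi i t}\bigl(\ft{\one_{F_u}}(\eta)\,e^{2\pi i t h_P(u)}-\ft{\one_{F_{-u}}}(\eta)\,e^{-2\pi i t h_P(-u)}\bigr)+O(t^{-2}),
\]
uniformly for $\eta$ in a bounded subset of $u^\perp$ (the $1/t$ term comes from integrating by parts in the $u$-direction; the remaining facets contribute $O(t^{-2})$). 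Since $P$ is already known centrally symmetric, place its centre at the origin: then $h_P(u)=h_P(-u)=\rho$ and $\ft{\one_{F_{-u}}}=\overline{\ft{\one_{F_u}}}$ on $u^\perp$, so the leading coefficient is proportional to $\mathrm{Im}\bigl(\ft{\one_{F_u}}(\eta)\,e^{2\pi i t\rho}\bigr)$; consequently, near the cylinder over $F_u$ the zero set $Z$ is a family of sheets $t_n(\eta)=\frac{1}{2\pi\rho}\arg\ft{\one_{F_u}}(\eta)-\frac{n}{2\rho}+o(1)$, $n\in\Z$. Since $\ft{\mu}$ is a nonzero translation-bounded measure living on $\{0\}\cup Z$, these sheets must be affine hyperplanes parallel to $u^\perp$; equivalently $\arg\ft{\one_{F_u}}$ is an affine function of $\eta$, equivalently (re-centring $F_u$) $\ft{\one_{F_u}}$ is real-valued on $u^\perp$, equivalently $F_u=-F_u+c'$. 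Packaged differently, the translates of $F_u$ occurring in the $k$-tiling, projected onto $u^\perp$, form a multiple tiling of $u^\perp$ by $F_u$, to which the inductive hypothesis applies directly.

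The step I expect to be the crux is this last one: deducing from ``$P$ $k$-tiles $\R^d$'' that the translates of a facet $F_u$ genuinely multiply tile $\R^{d-1}$ — equivalently, that the zero-sheets above are forced to straighten. Reading a tiling of $\R^{d-1}$ off the trace of the $k$-tiling on a single hyperplane orthogonal to $u$ does \emph{not} work, since that trace mixes translates of $F_u$ with horizontal cross-sections of other translates of $P$ and is not a tiling by $F_u$ alone; one must argue globally, e.g.\ by controlling $\ft{\mu}$ along every line in direction $u$ — where the positive lower density of $\mu$ prevents $\ft{\mu}$ from vanishing near infinity — and feeding this back into the sheet equation. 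The other place needing care is the bookkeeping in the central-symmetry step: justifying the Lebesgue-decomposition split and the linear-growth bound when $\Lambda$ may carry positive density on hyperplanes orthogonal to $u$ (a ``layered'' translation set), which is exactly why the cutoff in the $u^\perp$ directions is inserted.
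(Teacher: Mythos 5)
This theorem is not proved in the present paper; it is imported from \cite{gravin2011translational}, so your attempt should be measured against the argument of that reference (whose Lemmas 3.1--3.2 are also the input for the results of \S\ref{sec:condition}).

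Your first half --- central symmetry of $P$ --- is essentially correct and is, in substance, the known argument: show that for every facet normal $u$ the opposite facet exists with $\vol(F_u)=\vol(F_{-u})$, then invoke Minkowski's uniqueness theorem. The reference obtains the volume identity from the stronger pointwise statement $\sigma_{F_u}*\delta_\Lambda=\sigma_{F_{-u}}*\delta_\Lambda$, proved by an elementary counting argument (moving a generic point a small step in the direction $u$ cannot change the covering number $k$, so exits through facets with $\inner{n_F}{u}>0$ are matched by entries); your route via $D_u(\one_P*\delta_\Lambda)=0$, a cutoff in $u^\perp$, and the Lebesgue decomposition of the push-forward is a workable repackaging, and the bookkeeping you flag is routine. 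Note, however, that you only extract the scalar consequence $\vol(F_u)=\vol(F_{-u})$, whereas the full measure identity is exactly the input needed for the second half.

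The second half has a genuine gap, and it sits precisely where you suspect. From $\supp\ft{\delta_\Lambda}\subseteq\Set{0}\cup\bZ(\ft{\one_P})$ and the stationary-phase description of $\bZ(\ft{\one_P})$ in a tube around $\R u$ you assert that ``these sheets must be affine hyperplanes.'' Nothing written forces this: a tempered distribution can be supported on curved sheets, and nothing so far even guarantees that $\ft{\delta_\Lambda}$ has any support at all in the tube $\{tu+\eta:\ t>T,\ |\eta|\le R\}$ --- if it has none, the sheet equation carries no information. The positive density of $\Lambda$ does yield a ``no large gaps in $\supp\ft{\delta_\Lambda}$'' statement (every ball of a fixed radius meets the support), but consecutive sheets are only $1/(2\rho)$ apart, so that lemma is automatically consistent with arbitrarily curved sheets and cannot close the gap by itself; a genuinely new mechanism is required. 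Likewise, the reformulation you call equivalent --- that $F_u$ by itself multiply tiles $\R^{d-1}$ --- is never established and is not what the tiling hypothesis directly provides. The reference avoids all of this by staying in physical space: restricting $\sigma_{F_u}*\delta_\Lambda=\sigma_{F_{-u}}*\delta_\Lambda$ to a single hyperplane $H$ with normal $u$ gives $\sum_i\one_{F_u+s_i}=\sum_j\one_{F_{-u}+t_j}$ a.e.\ on $H\cong\R^{d-1}$ for two discrete multisets of translations, i.e.\ a signed tiling at level $0$ by translates of the two convex bodies $F_u$ and $F_{-u}$; a direct convex-geometry comparison of the two sides then shows $F_{-u}$ is a translate of $F_u$, and since central symmetry of $P$ gives $F_{-u}=-F_u+c$, the facet is centrally symmetric. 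That route needs no induction on $d$ and no appeal to Bolle's theorem for a base case.
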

In the present context of $\R^3$,
it  follows  immediately from the latter theorem that a  $k$-tiler $P \subset \R^3$ is
necessarily a zonotope.
In this paper we extend the result of Kolountzakis~\cite{kolountzakis2000structure} from $\R^2$
to $\R^3$, providing a structure theorem for multiple tilings by polytopes in three dimensions.
\begin{main*}\label{main}
Suppose a polytope $P$ $k$-tiles $\R^3$ with a discrete multiset $\Lambda$, and suppose that
$P$ is not a two-flat zonotope.  Then $\Lambda$ is a finite union of translated lattices.
\end{main*}

 It turns out that if $P$ is a  rational two-flat zonotope, then $P$ admits a $k$-tiling with
a non-quasi-periodic set of translation vectors $\Lambda$,
as we show in our Corollary~\ref{cor:weird-tiling}.
For some of the classical study of $1$-tilings, and their interesting connections to zonotopes, the reader may refer to
the work of \cite{McMullen75}, \cite{McMullen80}, \cite{Shepard}, \cite{Ziegler_lectures}, and~\cite{Alexandrov}.
Here we find it very useful to use the intuitive language of
distributions \cite{rudin1973fa,strichartz2003guide} in order to think -- and indeed discover -- facts
about $k$-tilings.
To that end we introduce the distribution (which is locally a measure)
\beql{delta-lambda}
\delta_\Lambda:= \sum_{\lambda \in \Lambda}  \delta_\lambda,
\eeq
where $\delta_\lambda$ is the Dirac delta function at $\lambda \in \R^d$.
To develop some intuition, we may  check formally that
\[
\delta_\Lambda * \one_P = \sum_{\lambda \in \Lambda}  \delta_\lambda * \one_P = \sum_{\lambda \in \Lambda}  \one_{P+\lambda},
\]
so that from the first definition \eqref{k.tiling.def}
of $k$-tiling, we see that a polytope $P$ is a $k$-tiler if and only if
\begin{equation}\label{k.tiling.def2}
\delta_\Lambda * \one_P=k.
\end{equation}

The paper is modularized into short sections that highlight each step separately,
and the organization runs as follows.
In  \S\ref{sec:intro} we compute the Fourier transform of any $4$-legged frame of a polytope,
and show that its zeros form a certain countable union of hyperplanes.
In \S\ref{sec:condition} we find a sufficient condition,
which we call the {\em intersection property},
for the Fourier transform of ${\delta_\Lambda}$ to have a discrete support.  Then we
show that if $P$ is a $k$-tiler,
and if the intersection property holds for all $4$-legged frames of $P$,
then  $\supp\ft{\delta_\Lambda}$ (the support of the distribution
$\ft{\delta_\Lambda}$, the Fourier Transform of the distribution $\delta_\Lambda$)
is a discrete set in $\R^3$, of bounded density.

In \S\ref{sec:structure}, we prove that the intersection property implies the quasi-periodicity of $\Lambda$.

In \S\ref{sec:exceptions} we discover a fascinating family of $k$-tilers,
associated to a non-discrete $\supp{\ft{\delta_\Lambda}}$.
We prove that if $P$ tiles $\R^3$ with multiplicity, by translations with a discrete multiset $\Lambda$,
and the intersection property fails to hold, then $P$ must be a two-flat zonotope.

The proof of the Main Theorem is also given in \S\ref{sec:exceptions};
this proof is quite short since it just strings together all of the results of the previous sections.
In the final section, we  show that each rational two-flat zonotope
admits a very peculiar non-quasi-periodic $k$-tiling.  We note that it may be quite
difficult to offer a visualization of these $3$-dimensional non-quasi-periodic tilings,
and that we discovered them by using Fourier methods.

\section{Preliminaries}\label{sec:defs}

Suppose the polytope $P$ tiles multiply with the translates $\Lambda \subseteq \R^d$.
We will  need to understand some basic facts about how the
$\Lambda$ points are distributed, for example in Theorem~\ref{th:2d} below.

\begin{definition}
(Uniform density)\\
A multiset $\Lambda\subseteq\R^d$ has asymptotic density
$\rho$ if
$$
\lim_{R\to\infty} {\#(\Lambda \cap B_R(x)) \over \Abs{B_R(x)}} \to
\rho
$$
uniformly in $x\in\R^d$. In this case we write $\rho = \dens \Lambda$.
Another (weaker) notion is that of  {\it bounded}
density - we say that $\Lambda$ has (uniformly) bounded density if
$
 {\#(\Lambda \cap B_R(x)) \over \Abs{B_R(x)}} \le M
$
for $x\in\R^d$, and $R>1$. We say then that $\Lambda$ has density (uniformly) bounded by $M$.
\end{definition}

Since we intend to speak of the Fourier Transform of $\delta_\Lambda$ it is important to us
that if $\Lambda$ has bounded density then $\delta_\Lambda$
is a tempered distribution \cite{rudin1973fa,strichartz2003guide},
and, therefore, its Fourier Transform $\ft{\delta_\Lambda}$ is well defined.
And, it is almost obvious by comparing volumes that, if a polytope $P$ $k$-tiles $\R^d$ with
translates $\Lambda$ then $\Lambda$ has density $k/\Abs{P}$.

For any symmetric polytope $P$, and any face $F \subset P$,
we define $F^{-}$ to be the face of $P$ symmetric to $F$ with respect to
$P$'s center of symmetry.
We call $F^-$ the {\bf opposite face} of $F$.

Throughout the paper, we use the notation $\bx^\perp$ to denote the perpendicular subspace,
of codimension $1$, to the vector $\bx$.
We also use the standard convention of boldfacing all vectors,
to differentiate between $\bv$ and $v$, for example.
We furthermore use the convention that $[\be]$  denotes the
$1$-dimensional line segment from $0$ to the endpoint of the vector $\be$.
Whenever it is clear from context, we will also write $[e]$ to denote the same
line segment - for example, in the case that $e$ denotes an edge of a polytope.

We let $\bZ(f)$ be the zero set of the function $f$.

\begin{definition}[$4$-legged-frame of a polytope]
 \label{def:4legs}
 ~
\medskip

(a) Suppose $P\in\R^3$ is a zonotope (symmetric polytope with symmetric facets).
A collection of four (one-dimensional) edges of $P$ is called a  {\bf $4$-legged-frame} if whenever $e$ is one of the edges then
there exist two vectors $\btau_1$ and $\btau_2$ such that
the four edges are
$$
[e],\ [e]+\btau_1, [e]+\btau_2 \text{ and } [e]+\btau_1+\btau_2,
$$
and such that the edges
$[e]$ and $[e]+\btau_1$ belong to the same face of $P$ and the edges $[e]+\btau_2$ and $[e]+\btau_1+\btau_2$ belong to the opposite face.

\medskip

(b) For a set of four legs as above the {\bf leg measure} is the measure supported on the legs
and is equal to arc-length on the two legs $[e]$ and $[e]+\btau_1+\btau_2$ and minus arc-length on the
two legs $[e]+\btau_1$ and $[e]+\btau_2$. We denote this measure by $\mu_{e,\tau_1,\tau_2}$.
The leg measure is defined up to sign.
\end{definition}

\begin{remark}
A set of four legs of a symmetric polytope with symmetric faces is determined uniquely if we know
two opposite edges on a face (these edges are the first two legs). The other two legs are then the
corresponding opposite edges on the opposite face.
\end{remark}

\begin{figure}[h]
\label{fig:triangle}
\centering
\def\svgwidth{200pt}
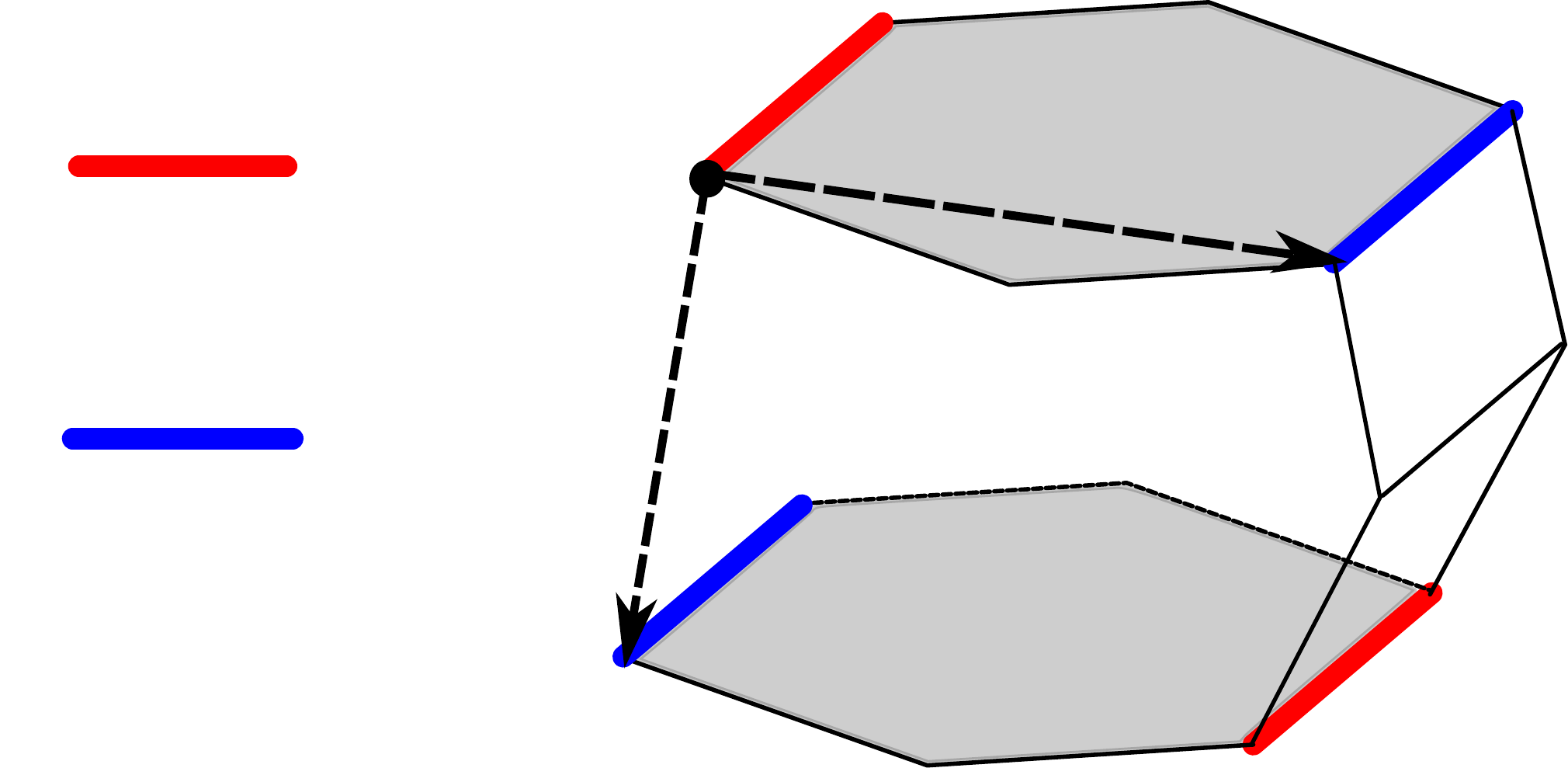
\caption{Four legs of a convex polytope and leg-measure.
The colored segments are edges of the polytope.
Translating an edge by $\pm\tau_1$ gives you the opposite edge on a face.
Translating by $\pm\tau_2$ takes you to the opposite face.}
\end{figure}

\section{The Fourier Transform of a $4$-legged frame}\label{sec:intro}

\begin{lemma}\label{lm:ft-zeroes-edges}
Suppose $\be, \btau_1, \btau_2 \in \R^3$ are linearly independent
and consider the measure  \\$\mu=\mu_{e,\tau_1,\tau_2}$ (see Definition \ref{def:4legs}).
Then the zero-set of the Fourier Transform $\hat{\mu}$, is
\beql{zero-set}
\bZ(\hat{\mu}) = \bbH_{_{-0}}(\be) \cup \bbH(\btau_1) \cup \bbH(\btau_2),
\eeq
where, if $\bx$ is a non-zero vector and $\Ds \bx^* = \frac{\bx}{\Abs{\bx}^2}$ is its geometric inverse,
\beql{def-s}
\bbH(\bx) = \Z \bx^* + \bx^\perp
\eeq
and
\beql{def-s-prime}
\bbH_{_{-0}}(\bx) = \left(\Z\setminus\Set{0}\right)\bx^* + \bx^\perp.
\eeq
Here $\bx^\perp$ is the hyperplane orthogonal to the vector $\bx$, so that
$\bbH(\bx) = \Z \bx^* + \bx^\perp$ is a collection of parallel hyperplanes, orthogonal to $\bx$
spaced by $1/\Abs{\bx}$.
\end{lemma}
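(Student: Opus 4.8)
The plan is to compute $\widehat{\mu}$ in closed form by exploiting a product structure hidden in the four legs, and then read off its zero set factor by factor. Writing $\sigma_S$ for the arc-length (unit-density) measure carried by a segment $S$, Definition~\ref{def:4legs} gives the decomposition
\[
\mu = \sigma_{[e]} - \sigma_{[e]+\btau_1} - \sigma_{[e]+\btau_2} + \sigma_{[e]+\btau_1+\btau_2}.
\]
Since translating a measure by a vector $\bv$ multiplies its Fourier transform by the unimodular factor $e^{-2\pi i \inner{\bv}{\bxi}}$, and since the four signs above are precisely the expansion of a product, we obtain the factorization
\[
\widehat{\mu}(\bxi) = \widehat{\sigma_{[e]}}(\bxi)\,\bigl(1 - e^{-2\pi i \inner{\btau_1}{\bxi}}\bigr)\bigl(1 - e^{-2\pi i \inner{\btau_2}{\bxi}}\bigr),
\]
where the sign convention in the exponent is immaterial because only the vanishing loci of these quantities matter below.

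Next I would evaluate the one remaining genuine integral: parametrising $[e]$ as $\Set{t\be : 0 \le t \le 1}$ yields
\[
\widehat{\sigma_{[e]}}(\bxi) = \Abs{\be}\int_0^1 e^{-2\pi i t \inner{\be}{\bxi}}\,dt = \Abs{\be}\,\frac{1 - e^{-2\pi i \inner{\be}{\bxi}}}{2\pi i \inner{\be}{\bxi}},
\]
with the right-hand side read as its limit value $\Abs{\be}$ when $\inner{\be}{\bxi} = 0$. Hence $\widehat{\sigma_{[e]}}(\bxi)$ vanishes exactly when $e^{-2\pi i \inner{\be}{\bxi}} = 1$ \emph{and} $\inner{\be}{\bxi} \neq 0$, i.e.\ when $\inner{\be}{\bxi} \in \Z \setminus \Set{0}$. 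This removable singularity at $\inner{\be}{\bxi} = 0$ is the reason the contribution of $\be$ is the punctured family $\bbH_{_{-0}}(\be)$ rather than the full family $\bbH(\be)$; it is the one point in the argument that needs a little care, and I would flag it explicitly. The other two factors $1 - e^{-2\pi i \inner{\btau_j}{\bxi}}$ have no singularity, so they vanish precisely when $\inner{\btau_j}{\bxi} \in \Z$.

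Finally I would translate each of these arithmetic conditions into the hyperplane language of the statement. Writing $\bxi = \alpha\bx + \bxi'$ with $\bxi' \in \bx^\perp$ gives $\inner{\bx}{\bxi} = \alpha\Abs{\bx}^2$, so $\inner{\bx}{\bxi} = n$ is equivalent to $\bxi \in n\bx^* + \bx^\perp$, and consecutive such hyperplanes sit at distance $\Abs{\bx^*} = 1/\Abs{\bx}$. Applying this with $\bx = \btau_1,\btau_2$ and $n$ ranging over all of $\Z$ produces $\bbH(\btau_1)$ and $\bbH(\btau_2)$, while applying it with $\bx = \be$ and $n$ ranging over $\Z\setminus\Set{0}$ produces $\bbH_{_{-0}}(\be)$. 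Since the zero set of a product of functions is the union of their zero sets, $\bZ(\widehat{\mu}) = \bbH_{_{-0}}(\be) \cup \bbH(\btau_1) \cup \bbH(\btau_2)$, which is exactly \eqref{zero-set}. I would close with the remark that linear independence of $\be,\btau_1,\btau_2$ plays no role in the vanishing computation itself, but it guarantees that the three hyperplane families point in genuinely distinct directions, which is what makes this description useful in the later sections.
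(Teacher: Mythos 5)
Your proposal is correct and follows essentially the same route as the paper: the signed four-term decomposition you write down is exactly the convolution $\sigma_{[e]}*(\delta_0-\delta_{\tau_1})*(\delta_0-\delta_{\tau_2})$ used there, so your product factorization of $\widehat{\mu}$ is the Fourier-side restatement of the paper's argument, and you correctly isolate the one delicate point (the removable singularity of the sinc factor at $\inner{\be}{\bxi}=0$, which is why $\be$ contributes the punctured family $\bbH_{_{-0}}(\be)$). The only cosmetic difference is that the paper first recenters the segment at the origin to get a real $\sin$-quotient instead of your complex exponential form; the zero sets are of course the same.
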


\begin{proof}
Translating a measure does not alter the zero-set of its FT so we may translate $\mu$ so that $0$ is
the midpoint of the first line segment, which now runs from $-\be/2$ to $\be/2$.
Denoting by $\nu$ the arc-length measure on this line segment and writing
$\alpha=\delta_0-\delta_{\tau_1}$ and $\beta=\delta_0-\delta_{\tau_2}$ we obtain
$\mu$ as a convolution:
$$
\mu = \nu*\alpha*\beta.
$$
Taking the FT we get that
$$
\bZ(\hat{\mu}) = \bZ(\hat{\nu}) \cup \bZ(\hat{\alpha}) \cup \bZ(\hat{\beta}).
$$
Based on the calculation of the FT of the indicator function of $\left[-\frac{1}{2}, \frac{1}{2}\right]$
$$
\int_{-1/2}^{1/2} e^{-2\pi i \xi x}\,dx = \frac{\sin\pi\xi}{\pi\xi}
$$
we conclude that
$$
\ft{\nu}(\bu) = \Abs{\be}\frac{\sin\pi \langle \bu, \be\rangle}{\pi \langle \bu, \be\rangle}.
$$
One also immediately obtains the formulas
$$
\ft{\alpha}(\bu) = 2i e^{-\pi i \langle\btau_1, \bu\rangle} \sin{\pi \langle \btau_1, \bu\rangle}
$$
and
$$
\ft{\beta}(\bu) = 2i e^{-\pi i \langle\btau_2, \bu\rangle} \sin{\pi \langle\btau_2, \bu\rangle}.
$$
Since $\ft{\nu}$, $\ft{\alpha}$ and $\ft{\beta}$ vanish precisely on $\bbH_{_{-0}}(\be)$, $\bbH(\btau_1)$ and
$\bbH(\btau_2)$ respectively, the proof of Lemma \ref{lm:ft-zeroes-edges} is complete.
\end{proof}

\section{A sufficient condition for $\ft{\delta_\Lambda}$ to have discrete support}
\label{sec:condition}

\begin{theorem}\label{th:intersection}
Suppose $P$ is a symmetric polytope in $\R^3$ with symmetric faces and $\Lambda$ is a multiset
of points in $\R^3$ such that $P$ tiles at level $k$, a positive integer, when translated
at the locations $\lambda \in \Lambda$. Then we have
\beql{planes}
\supp\ft{\delta_\Lambda} \subseteq \Set{0} \cup
  \bigcap_{e, \tau_1, \tau_2} \left( \bbH_{_{-0}}(\be) \cup \bbH(\btau_1) \cup \bbH(\btau_2) \right),
\eeq
where $\delta_\Lambda$ is the measure corresponding to $\Lambda$ defined in \eqref{delta-lambda} and the intersection
in \eqref{planes} is taken over all $4$-legged frames $(e, \tau_1, \tau_2)$ of $P$.
\end{theorem}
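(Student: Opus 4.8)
The plan is to convert the tiling identity \eqref{k.tiling.def2} into a statement about the Fourier transform of $\delta_\Lambda$ by testing it against the leg measures of $P$. The key observation is that for each $4$-legged frame $(e,\tau_1,\tau_2)$ of $P$, the associated leg measure $\mu_{e,\tau_1,\tau_2}$ is supported on the boundary of $P$, and more importantly it arises as a ``difference of boundary pieces'' that annihilates $\one_P$ in a suitable convolution sense. Concretely, I would argue as follows. Fix a $4$-legged frame; the two edges $[e]$ and $[e]+\btau_1$ lie on a face $F$, and $[e]+\btau_2$, $[e]+\btau_1+\btau_2$ lie on $F^-$. Consider the directional derivative (in the distribution sense) of $\one_P$ in the direction $\be$: this derivative is a measure supported on $\partial P$, equal to (signed) surface measure weighted by the $\be$-component of the outer normal, and on the pair of facets $F,F^-$ it contains precisely a surface-measure contribution. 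The combinatorics of a zonotope (symmetric polytope with symmetric facets) guarantees that the ``strip'' swept by translating $[e]$ across $F$ and the opposite strip on $F^-$ can be isolated; taking a second difference of $\one_P$ along the edge directions realizes $\mu_{e,\tau_1,\tau_2}$, up to sign, as obtained from $\one_P$ by applying a finite-difference (convolution) operator. Since $\delta_\Lambda * \one_P = k$ is constant, applying any constant-coefficient differential or finite-difference operator of positive order kills the right-hand side, hence
\[
\delta_\Lambda * \mu_{e,\tau_1,\tau_2} = 0
\]
as a tempered distribution (the convolution makes sense because $\delta_\Lambda$ has bounded density and $\mu$ has compact support).

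Taking Fourier transforms, this gives $\ft{\delta_\Lambda}\cdot\ft{\mu_{e,\tau_1,\tau_2}} = 0$ as distributions, so $\supp\ft{\delta_\Lambda}$ is contained in $\bZ(\ft{\mu_{e,\tau_1,\tau_2}})$ away from any point where $\ft{\mu}$ fails to be, locally, a well-behaved factor — but since $\ft{\mu}$ is a (bounded, real-analytic) function, the product being zero forces $\supp\ft{\delta_\Lambda}$ into its zero set. By Lemma \ref{lm:ft-zeroes-edges}, whenever $\be,\btau_1,\btau_2$ are linearly independent this zero set is exactly $\bbH_{_{-0}}(\be)\cup\bbH(\btau_1)\cup\bbH(\btau_2)$. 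Intersecting over all $4$-legged frames of $P$ then yields
\[
\supp\ft{\delta_\Lambda}\ \subseteq\ \bigcap_{e,\tau_1,\tau_2}\left(\bbH_{_{-0}}(\be)\cup\bbH(\btau_1)\cup\bbH(\btau_2)\right),
\]
which is \eqref{planes} except possibly for the origin. The point $\{0\}$ must be added back because $\ft{\delta_\Lambda}$ has a mass at $0$ coming from the density of $\Lambda$ (indeed $\ft{\delta_\Lambda}$ near $0$ looks like $(k/\Abs{P})\delta_0$ plus lower-order terms), and $0\in\bx^\perp$ always, so the only subtlety is that $0\notin\bbH_{_{-0}}(\be)$; listing $\{0\}$ separately handles this uniformly.

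I would organize the write-up in three short steps: (i) realize each leg measure $\mu_{e,\tau_1,\tau_2}$ as the image of $\one_P$ under a finite-difference/derivative convolution operator, using the symmetric-face structure of $P$; (ii) deduce $\delta_\Lambda*\mu_{e,\tau_1,\tau_2}=0$ from $\delta_\Lambda*\one_P=k$ and pass to the Fourier side to get $\supp\ft{\delta_\Lambda}\subseteq\bZ(\ft{\mu_{e,\tau_1,\tau_2}})$; (iii) invoke Lemma \ref{lm:ft-zeroes-edges} and intersect, remembering to adjoin $\{0\}$. I expect step (i) to be the main obstacle: one must check carefully that translating an edge across a face really does sweep out a sub-facet of $P$ (so that the relevant boundary contributions cancel to leave exactly the four legs with the right signs), which relies on the facets of a $3$-dimensional zonotope being centrally symmetric polygons whose edges come in opposite pairs. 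The analytic points — that $\delta_\Lambda$ is tempered, that convolution with the compactly supported $\mu$ is legitimate, and that multiplying a distribution by the real-analytic function $\ft{\mu}$ confines its support to $\bZ(\ft{\mu})$ — are routine given the boundedness of the density of $\Lambda$ established in \S\ref{sec:defs}.
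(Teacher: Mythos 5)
Your steps (ii) and (iii) are exactly what the paper does: from $\delta_\Lambda*\mu_{e,\tau_1,\tau_2}=0$ one passes to $\ft{\mu}\,\ft{\delta_\Lambda}=0$, concludes $\supp\ft{\delta_\Lambda}\subseteq\Set{0}\cup\bZ(\ft{\mu})$ because $\ft{\mu}$ is analytic and invertible off its zero set, and then applies Lemma \ref{lm:ft-zeroes-edges} and intersects over all frames. The gap is in step (i), which you rightly flag as the main obstacle: you propose to obtain $\delta_\Lambda*\mu_{e,\tau_1,\tau_2}=0$ \emph{formally}, by writing $\mu_{e,\tau_1,\tau_2}=D*\one_P$ for a constant-coefficient differential/finite-difference operator $D$ and applying $D$ to $\delta_\Lambda*\one_P=k$. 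No such $D$ exists in general. On the Fourier side, $\mu=D*\one_P$ would force $\bZ(\ft{\one_P})\subseteq\bZ(\ft{\mu})=\bbH_{_{-0}}(\be)\cup\bbH(\btau_1)\cup\bbH(\btau_2)$, a locally finite union of planes in three fixed directions. This holds for a parallelepiped (indeed $\partial_x\partial_y\one_{[0,1]^3}$ is exactly a leg measure, which is presumably the source of the intuition), but it already fails for a hexagonal prism $P=Q+[\bc]$ with the frame $\be=\bc$, $\btau_1=\ba_1$, $\btau_2=\ba_2+\ba_3$: on the plane $\inner{\bxi}{\bc}=0$ the function $\ft{\one_P}$ restricts to $\ft{\one_Q}$, and for the hexagon $Q$ generated by $(1,0),(0,1),(1,1)$ the restriction of $\ft{\one_Q}$ to the line $\xi_1=1/2$ vanishes exactly where $\cos 2\pi\xi_2=(4\xi_2+1)^{-1}$, none of whose solutions lie on the family $\inner{\bxi}{\ba_2+\ba_3}\in\Z$. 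Geometrically, the failure is that $\partial_{\bu}\partial_{\bv}\one_P$ and its finite differences carry singular contributions along every edge in the relevant zones, and for a zonotope with more than three zones the unwanted ones do not cancel. The existence of a point $\bxi_0\in\bZ(\ft{\one_P})\setminus\bZ(\ft{\mu})$ in fact shows that $\mu*T=0$ is \emph{not} a consequence of $\one_P*T=k$ for general tempered distributions $T$ (perturb $\delta_\Lambda$ by $\varepsilon\cos(2\pi\inner{\bxi_0}{x})$), so no purely convolution-algebraic derivation can work.

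What the paper uses at this point is Lemmas 3.1 and 3.2 of \cite{gravin2011translational}, which rest on a local multiplicity-counting argument rather than a formal identity: at a generic point of the affine plane of a translated facet $F+\lambda$ one compares the covering number $k$ on the two sides; only translates of $F$ and of $F^-$ contribute to the jump, so the signed surface measure $\sigma_F-\sigma_{F^-}$ tiles at level $0$ with $\Lambda$, and repeating the argument one dimension lower inside this facet tiling isolates the four legs with the signs of the leg measure. This genuinely uses that $\delta_\Lambda$ is a sum of unit point masses. If you replace your step (i) by this citation or by such a counting argument, the rest of your proof is correct and coincides with the paper's. (A minor remark: the $\Set{0}$ in \eqref{planes} needs no appeal to the mass of $\ft{\delta_\Lambda}$ at the origin, since $0\in\btau_1^\perp\subseteq\bbH(\btau_1)$ already lies in every set being intersected; it appears only because the general lemma being quoted produces it.)
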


\begin{proof}
We know from \cite{gravin2011translational} (see Lemma~$3.1$ and Lemma~$3.2$ in \cite{gravin2011translational})
that if $P$ tiles with $\Lambda$ and $\mu$ is a leg measure on $P$ then
$\mu$ also tiles with $\Lambda$, at level 0. In other words $\mu*\delta_\Lambda = 0$.
Since $P$ tiles when translated by $\Lambda$ it follows that $\Abs{\Lambda \cap [-R,R]^3} = O(R^3)$,
hence $\delta_\Lambda$ is a tempered distribution and we may take its FT which gives us
$\ft{\mu}\ft{\delta_\Lambda}=0$.
This implies (see the details in \cite[\S 1.2]{kolountzakis2004milano})
$$
\supp\ft{\delta_\Lambda} \subseteq \Set{0} \cup \bZ(\hat{\mu}).
$$
But the measure $\mu$ is exactly the one described in Lemma \ref{lm:ft-zeroes-edges}
and since this must be true for all sets of four legs of $P$ we conclude \eqref{planes}.
\end{proof}


\begin{corollary}\label{cor:intersection-property}
Suppose $P$ is a $k$-tiler with a discrete multiset $\Lambda$, in $\R^3$. Let the
following {\bf intersection property} hold:
\beql{intersection-property}
\bigcap_{e, \tau_1. \tau_2} \left( \be^\perp \cup \btau_1^\perp \cup \btau_2^\perp \right) = \Set{0},
\eeq
where the intersection above is taken over all sets of $4$-legged frames of $P$.\\
Then $\supp\ft{\delta_\Lambda}$ is a discrete set in $\R^3$, of bounded density.
\end{corollary}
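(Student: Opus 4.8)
The plan is to deduce the discreteness of $\supp\ft{\delta_\Lambda}$ from Theorem~\ref{th:intersection} by showing that, under the intersection property, the infinite intersection of unions of parallel families of hyperplanes appearing in \eqref{planes} can accumulate only at $\Set0$, and in fact is locally finite. First I would fix a large ball $B_R(0)$ and argue that only finitely many points of the set on the right-hand side of \eqref{planes} lie in it. Each $4$-legged frame $(e,\tau_1,\tau_2)$ contributes the set $\bbH_{_{-0}}(\be)\cup\bbH(\btau_1)\cup\bbH(\btau_2)$, which is a union of three countable families of equally spaced parallel hyperplanes (spacings $1/\Abs{\be}$, $1/\Abs{\btau_1}$, $1/\Abs{\btau_2}$), with the $\be$-family missing the central hyperplane $\be^\perp$ through the origin. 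Intersecting over all frames, a nonzero point $\bxi$ survives only if, for every frame, $\bxi$ lies on one of these three parallel families. I would first pass to the ``direction data'': for each frame, the normals $\be,\btau_1,\btau_2$ determine three directions, and a surviving $\bxi$ must be orthogonal to at least one of $\be,\tau_1,\tau_2$ after subtracting an integer multiple of the corresponding geometric inverse.

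The key step is a compactness-and-pigeonhole argument. There are only finitely many $4$-legged frames of $P$ (they are determined by pairs of opposite edges lying on a common face, and $P$ has finitely many faces and edges), so the intersection in \eqref{planes} is a \emph{finite} intersection of sets each of which is a union of three parallel hyperplane families. A finite intersection $\bigcap_j (A_j\cup B_j\cup C_j)$ expands, by distributivity, into a finite union of sets of the form $\bigcap_j X_j$ where each $X_j\in\Set{A_j,B_j,C_j}$ is a single parallel family of hyperplanes. So it suffices to show that each such ``selection'' $\bigcap_j X_j$ is discrete, i.e.\ has at most finitely many points in any ball, unless it is forced to be $\Set0$. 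Now $\bigcap_j X_j$ is an intersection of finitely many families of parallel hyperplanes. Writing $X_j = \Z\bv_j^* + \bv_j^\perp$ (or $(\Z\setminus\Set0)\bv_j^*+\bv_j^\perp$ in the $\be$ case), where the $\bv_j$ range over the chosen normals, the intersection is contained in the affine flat $\bigcap_j(\bv_j^\perp + c_j\bv_j^*)$ for the appropriate integer shift vector $(c_j)$; and the union over integer shifts of these flats is exactly $\bigcap_j X_j$. The intersection property \eqref{intersection-property} says precisely that $\bigcap_j \bv_j^\perp = \Set0$ for \emph{every} selection in which the normals are drawn from $\Set{\be,\tau_1,\tau_2}$ across all frames — hence the linear span of the chosen normals is all of $\R^3$, so for each fixed integer shift $(c_j)$ the affine flat $\bigcap_j(\bv_j^\perp+c_j\bv_j^*)$ is either empty or a single point. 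Since the shift vectors live in a discrete lattice and only those with bounded norm can meet $B_R(0)$ (the flat for shift $(c_j)$ lies at distance $\gtrsim \max_j\Abs{c_j}\cdot\min_j\Abs{\bv_j^*}$ from the origin along the relevant directions), only finitely many single-point flats meet $B_R(0)$. Therefore each selection contributes finitely many points to $B_R(0)$, and the finite union over selections does too; adding back the origin we get that $\supp\ft{\delta_\Lambda}\cap B_R(0)$ is finite. Letting $R$ grow and keeping track of the bound uniformly in the center of the ball gives bounded density.

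The bounded-density claim needs the estimate to be uniform in the \emph{location} of the ball, not just its radius: I would note that the set in \eqref{planes} is a fixed subset of $\R^3$ determined by $P$ alone, and a union of finitely many discrete (in fact, translated-lattice-like) pieces, so its intersection with $B_R(x)$ has cardinality bounded by $C(P)\cdot(1+R^3)$ uniformly in $x$ by comparing with the covering number of $B_R(x)$ by fundamental domains of the relevant rank-$3$ lattices; combined with $\supp\ft{\delta_\Lambda}$ being contained in it, this yields density bounded by some $M=M(P)$.

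The main obstacle I anticipate is the bookkeeping in the distributivity step: making precise that an intersection of parallel-hyperplane families, one of whose normal-sets spans $\R^3$, is a discrete set of points with controlled density, including the borderline role of the ``punctured'' family $\bbH_{_{-0}}(\be)$ (which only removes the origin and so does not affect discreteness away from $0$). Once that combinatorial geometry is set up cleanly, the rest is a direct application of Theorem~\ref{th:intersection} together with the hypothesis \eqref{intersection-property}.
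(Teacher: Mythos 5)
Your proposal is correct and follows essentially the same route as the paper: apply Theorem~\ref{th:intersection}, observe that the intersection property forces every selection of one normal from each $4$-legged frame to span $\R^3$, conclude that the resulting intersections of parallel hyperplane families are locally finite point sets, and bound the density by comparing with the lattice-like sets $\bigcap_\ell \bbH_\ell$. The only cosmetic difference is that the paper argues discreteness contrapositively (a non-discrete intersection would contain a line whose direction lies in $X_\ell^\perp$ for some selection, contradicting \eqref{intersection-property}), whereas you expand the finite intersection by distributivity and verify each selection directly — a somewhat more explicit rendering of the same argument.
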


\begin{proof}
The sets which are being intersected in \eqref{planes} are all unions of planes.
For this set to be non-discrete it must be the case that it contains an entire line  of direction,
say $\bu \in \R^3\setminus\Set{0}$.

This in turn implies that there is a selection $X_\ell$ of $\be$, $\btau_1$ or $\btau_2$ for
each set $\ell$ of four legs such that $\bu \in X_\ell^\perp$.
This contradicts condition \eqref{intersection-property}.

Having established that the intersection in the right hand side of \eqref{planes}
is a discrete point set we observe that the larger set
\beql{tmp4}
  \bigcap_{\be, \tau_1, \tau_2} \bbH(\be) \cup \bbH(\btau_1) \cup \bbH(\btau_2)
\eeq
is a finite union of discrete {\em groups}, each of them of the form
$$
\bigcap_{\ell} \bbH_{\ell},
$$
where $\ell$ runs through all possible sets of four legs of $P$ and for each $\ell=\Set{e,\tau_1,\tau_2}$ the set
$\bbH_{\ell}$ is one of $\bbH(\be), \bbH(\btau_1), \bbH(\btau_2)$.
Since each discrete group has bounded density so has the set \eqref{tmp4} and $\supp\ft{\delta_\Lambda}$
as its subset.
\end{proof}

\section{The intersection property  implies quasi-periodicity of $\Lambda$}
\label{sec:structure}

In this section we show how the discreteness of $\supp{\ft{\delta_\Lambda}}$ implies a rather rigid structure
for $\Lambda$.   Below we quote the result from \cite{kolountzakis2000structure},  where the multidimensional
statements had been proved in general, despite the fact that the final conclusions in
\cite{kolountzakis2000structure} are given only for dimension $2$.

\begin{theorem}[Kolountzakis, 2002]\label{th:2d}
 Suppose that for the multiset $\Lambda \subseteq \R^d$
 \begin{enumerate}
 \item\label{th:2d:1} $\Lambda$ has uniformly bounded density;
 \item\label{th:2d:2} $S:= \supp \widehat{\delta_{\Lambda}}$ is discrete;
 \item\label{th:2d:3} $ \left |S \cap  B_R(0)\right | \le  C\cdot R^d,$ for some positive constant $C$.
\end{enumerate}
Then $\Lambda$ is a finite union of translated $d$-dimensional lattices.
\end{theorem}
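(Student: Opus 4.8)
The plan is to treat this as a statement in the circle of ideas around Córdoba's theorem on Dirac combs whose Fourier transform is again a locally finite pure point measure, and to carry it out in three stages.

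\textbf{Stage 1: $\widehat{\delta_\Lambda}$ is a translation--bounded pure point measure.}
Hypothesis \eqref{th:2d:2} only tells us that the tempered distribution $\widehat{\delta_\Lambda}$ is supported on the discrete set $S$; a priori, near each $s\in S$ it is a finite combination $\sum_{|\alpha|\le N_s}a_{s,\alpha}\,\partial^\alpha\delta_s$ of derivatives of the Dirac mass. I would first rule out the derivative terms. Fix $s_0\in S$ and a Schwartz bump $\psi_0$ with $\psi_0(s_0)\ne 0$, supported in a neighbourhood of $s_0$ that meets $S$ only at $s_0$ (possible since $S$ is discrete), and test $\widehat{\delta_\Lambda}$ against the modulated family $\psi_v(\eta)=e^{2\pi i\langle v,\eta\rangle}\psi_0(\eta)$, $v\in\R^3$. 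On the one hand $\langle\widehat{\delta_\Lambda},\psi_v\rangle=\sum_{\lambda\in\Lambda}\widehat{\psi_0}(\lambda-v)$, which is bounded uniformly in $v$ because $\Lambda$ has bounded density and $\widehat{\psi_0}$ is Schwartz. On the other hand the local expansion at $s_0$ makes this pairing equal $e^{2\pi i\langle v,s_0\rangle}$ times a polynomial in $v$ of degree $N_{s_0}$; boundedness in $v$ forces $N_{s_0}=0$. Hence $\widehat{\delta_\Lambda}=\sum_{s\in S}c_s\,\delta_s$, and $|c_s|\le M$ (the density bound on $\Lambda$), either by the same pairing with a fixed bump or from the Fourier--Bohr formula $c_s=\lim_R\Abs{B_R(0)}^{-1}\int_{B_R(0)}e^{-2\pi i\langle s,x\rangle}\,d\delta_\Lambda(x)$, whose existence is a standard consequence of $\widehat{\delta_\Lambda}$ being a measure. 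Hypothesis \eqref{th:2d:3} will be used to guarantee absolute convergence of the sums over $S$ below.

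\textbf{Stage 2: $\delta_\Lambda$ is a strongly almost periodic measure.}
Fourier inversion gives $\delta_\Lambda=\sum_{s\in S}c_s\,e^{2\pi i\langle s,\cdot\rangle}$ as tempered distributions, so for any Schwartz mollifier $\varphi$ the smoothing $g_\varphi:=\delta_\Lambda*\varphi$ equals $\sum_{s\in S}c_s\widehat\varphi(s)\,e^{2\pi i\langle s,\cdot\rangle}$. This series converges absolutely and uniformly, since $\sup_s|c_s|<\infty$, $\widehat\varphi$ has Schwartz decay, and $\Abs{S\cap B_R(0)}\le C R^3$; hence $g_\varphi$ is Bohr almost periodic. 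Thus $\delta_\Lambda$ is a strongly almost periodic measure: for every $\varepsilon>0$ and every such $\varphi$, the set $T_{\varepsilon,\varphi}=\Set{t\in\R^3:\ \Linf{g_\varphi(\cdot+t)-g_\varphi}<\varepsilon}$ is relatively dense in $\R^3$.

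\textbf{Stage 3: extracting a full--rank lattice of exact periods.}
This is the technical heart; assume $\Lambda\ne\emptyset$ (otherwise the statement is trivial). Choose $\varphi\ge0$ with $\int\varphi=1$ supported in a small ball. The goal is to show that, once $\varepsilon$ is small enough relative to $\varphi$, every $t\in T_{\varepsilon,\varphi}$ lies within a bounded distance $\rho$ of an \emph{exact} period of $\delta_\Lambda$. The input is that $\delta_{\Lambda+t}*\varphi$ and $\delta_\Lambda*\varphi$ are uniformly close; the point is that $\delta_\Lambda$ is a sum of \emph{integer}--mass atoms, so at the resolution of $\varphi$ this near--coincidence of integer--weighted combs, being uniform over all of $\R^3$, can only persist if $\Lambda+t$ equals $\Lambda$ after one globally consistent small adjustment. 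Since $T_{\varepsilon,\varphi}$ is relatively dense and contained in $\Gamma+B_\rho(0)$, the group $\Gamma=\Set{t\in\R^3:\ \delta_{\Lambda+t}=\delta_\Lambda}$ of exact periods is itself relatively dense. Moreover $\Gamma$ is closed (it is the period group of a measure and $t\mapsto\delta_{\Lambda+t}$ is vaguely continuous by bounded density), and it is discrete: if the closed subgroup $\Gamma$ contained a line $\R v$, then $\lambda+\R v\subseteq\Lambda$ for $\lambda\in\Lambda$, so $\delta_\Lambda$ would not be locally finite. A closed, discrete, relatively dense subgroup of $\R^3$ is a $3$--dimensional lattice $L$. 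Finally $\Lambda$ is $L$--periodic of bounded density, so it meets a bounded fundamental domain of $L$ in a finite multiset $\{x_1,\dots,x_m\}$, and $\Lambda=\bigcup_{j=1}^m(x_j+L)$, as claimed. I expect the implication ``relatively dense set of $\varepsilon$--almost periods $\Rightarrow$ relatively dense set of exact periods'' to be the genuine obstacle: it is exactly there that the integrality of the atomic masses, together with the uniformity furnished by almost periodicity, must be exploited, whereas the remaining steps are soft.
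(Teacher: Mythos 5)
Your Stages 1 and 2 are correct and are essentially the standard opening moves for this kind of result: testing $\ft{\delta_\Lambda}$ against modulated bumps to rule out derivatives of Dirac masses (using the bounded density of $\Lambda$), and then using hypotheses \eqref{th:2d:2}--\eqref{th:2d:3} to get an absolutely and uniformly convergent Fourier expansion of $\delta_\Lambda*\varphi$, hence Bohr almost periodicity. Note, however, that this paper does not prove Theorem~\ref{th:2d} at all --- it quotes it from \cite{kolountzakis2000structure} --- so the relevant comparison is with the proof there, whose decisive ingredient is the idempotent theorem of Cohen in Meyer's form for measures whose Fourier transform is again a measure, not a period-extraction argument.

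Stage 3 has a genuine, unfixable gap: the claim that the group $\Gamma$ of exact periods of $\delta_\Lambda$ is relatively dense is false under the stated hypotheses. Take $d=1$ and $\delta_\Lambda=\delta_{\Z}+\delta_{\sqrt2\,\Z}$. Then $\Lambda$ has uniformly bounded density, $\ft{\delta_\Lambda}=\delta_{\Z}+\tfrac1{\sqrt2}\,\delta_{\Z/\sqrt2}$ has discrete closed support of linear growth, and the conclusion of the theorem holds ($\Lambda$ is a union of two full lattices); yet $\Gamma=\Set{0}$: a nonzero period $t$ would have to lie in $\Lambda$, and one checks directly that neither $t=n\in\Z\setminus\Set{0}$ (since then $\sqrt2+n\notin\Lambda$) nor $t=\sqrt2m$, $m\ne0$ (since then $1+\sqrt2m\notin\Lambda$) works. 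Meanwhile $g_\varphi$ is the sum of a $1$-periodic and a $\sqrt2$-periodic function, so by Kronecker's theorem $T_{\varepsilon,\varphi}$ is relatively dense for every $\varepsilon>0$. Hence the implication ``$\varepsilon$-almost period $\Rightarrow$ within bounded distance of an exact period'' fails for every $\varepsilon$, and the integrality of the atoms cannot rescue it: for such $t$ the comb $\Lambda+t$ is indeed locally a small perturbation of $\Lambda$, but the perturbation is not a single global translation and no exact period exists anywhere. More structurally, the conclusion ``finite union of translated lattices'' genuinely allows mutually incommensurable lattices, so no argument ending with ``$\Lambda$ is invariant under a full-rank lattice $L$, hence a finite union of cosets of $L$'' can be correct. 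What is needed after your Stage 2 is a tool that lands in the coset ring of $\R^d$, and that is exactly what the Cohen--Meyer idempotent theorem provides; this is the route taken in \cite{kolountzakis2000structure}.
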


Next, we verify the conditions of the theorem above, for our $3$-dimensional $k$-tilers with a  multiset $\Lambda$.

\begin{claim}\label{cl:3d}
Suppose that convex polytope $P$ $k$-tiles $\R^{d}$ with $\Lambda$ and the intersection property
\eqref{intersection-property} of Corollary~\ref{cor:intersection-property} is true.
Then $\Lambda$ is a finite union of translated $d$-dimensional lattices.
\end{claim}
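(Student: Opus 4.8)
The plan is to combine Corollary~\ref{cor:intersection-property} with Theorem~\ref{th:2d}, since together these two results already contain everything we need; the claim is essentially a bookkeeping exercise that chains them.

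First I would observe that the hypotheses of Claim~\ref{cl:3d} are exactly the hypotheses of Corollary~\ref{cor:intersection-property} (a convex polytope $P$ that $k$-tiles $\R^3$ with a discrete multiset $\Lambda$, together with the intersection property \eqref{intersection-property}). Note that although the claim is stated for $\R^d$, the framework of $4$-legged frames and Lemma~\ref{lm:ft-zeroes-edges} was developed for $d=3$, so the relevant case is $d=3$; by the structure theorem of Gravin--Robins--Shiryaev quoted in the introduction, a convex $k$-tiler in $\R^3$ is automatically a symmetric polytope with symmetric facets, i.e.\ a zonotope, so the standing assumptions of Corollary~\ref{cor:intersection-property} and Theorem~\ref{th:intersection} are met. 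Applying Corollary~\ref{cor:intersection-property} then gives that $S:=\supp\ft{\delta_\Lambda}$ is a discrete set in $\R^3$ of (uniformly) bounded density.

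Next I would verify the three numbered hypotheses of Theorem~\ref{th:2d} one at a time. Condition~\eqref{th:2d:1}: since $P$ $k$-tiles $\R^3$ with $\Lambda$, a volume comparison (as remarked in \S\ref{sec:defs}) shows $\Lambda$ has density $k/\Abs{P}$, hence in particular uniformly bounded density. Condition~\eqref{th:2d:2}: discreteness of $S$ is exactly the conclusion of Corollary~\ref{cor:intersection-property}. Condition~\eqref{th:2d:3}: the bounded-density conclusion of Corollary~\ref{cor:intersection-property} says precisely that $\Abs{S\cap B_R(0)}\le C\cdot R^3$ for some constant $C$ and all $R>1$, and the finitely many small-$R$ values are handled trivially by enlarging $C$. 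With all three conditions checked for $d=3$, Theorem~\ref{th:2d} yields that $\Lambda$ is a finite union of translated $3$-dimensional lattices, which is the assertion of Claim~\ref{cl:3d}.

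There is no real obstacle here; the only point requiring a sentence of care is making sure that the ``bounded density'' produced by Corollary~\ref{cor:intersection-property} is stated in the quantitative form $\Abs{S\cap B_R(0)}\le C R^d$ demanded by Theorem~\ref{th:2d}, which it is, essentially by unwinding the definition of bounded density and using that $B_R(0)$ has volume comparable to $R^d$. Thus the proof is just: invoke Corollary~\ref{cor:intersection-property} to get (2) and (3), note (1) from the volume count, and feed everything into Theorem~\ref{th:2d}.
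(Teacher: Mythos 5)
Your proposal is correct and follows essentially the same route as the paper: verify the three hypotheses of Theorem~\ref{th:2d} (bounded density of $\Lambda$ from the tiling, discreteness and bounded density of $\supp\ft{\delta_\Lambda}$ from Corollary~\ref{cor:intersection-property}) and then invoke that theorem. Your added remarks about restricting to $d=3$ and converting ``bounded density'' into the form $\Abs{S\cap B_R(0)}\le C R^d$ are sensible elaborations of steps the paper leaves implicit.
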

\begin{proof}
We just need to verify conditions \eqref{th:2d:1}, \eqref{th:2d:2} and \eqref{th:2d:3} of  theorem \ref{th:2d}.

Hypothesis~\eqref{th:2d:1} simply follows from the fact that in each sufficiently large ball $B_{R}(x)$ of $\R^{d}$ every point is covered exactly $k$ times by the translations of $P$ with the set $\Lambda\cap B_{R'}(x)$, where $R'=R+\diam P$.

Hypotheses~\eqref{th:2d:2} and \eqref{th:2d:3} follow from Corollary~\ref{cor:intersection-property}.
\end{proof}

\section{$k$-tilers associated to a non-discrete $\supp{\ft{\delta_\Lambda}}$}
\label{sec:exceptions}

In this section we study the convex polytopes that admit exceptional multiple tilings, in the sense
that the multiset of translations $\Lambda$ is not a finite union of $3$-dimensional lattices.  A class
 of these exceptions is easily provided by  prisms (Minkowski sums of a symmetric polygon with
a line segment, not in the polygons plane), for which one can lift a $2$-dimensional $k$-tiling
up into the third dimension by  separately $1$-tiling the tube above each projection.

By Claim~\ref{cl:3d} for such a tiling, the intersection property~\eqref{intersection-property}
cannot be true. Therefore, there exists a line (in fact a $1$-dimensional subspace) $l \subseteq \R^{3}$ such that
\beql{non-discrete-support}
 l   \subseteq   \bigcap_{\be, \tau_1. \tau_2} \left( \be^\perp \cup \btau_1^\perp \cup \btau_2^\perp \right) .
\eeq

It was already shown in \cite{gravin2011translational}
that a multiple tiler in $\R^3$ must be a zonotope,
i.~e. a Minkowski sum of line segments.
Here we will show that given the non-discreteness of $\supp{\ft{\delta_\Lambda}}$,
we can deduce that a zonotope is a Minkowski sum of two $2$-dimensional symmetric polygons.
And in Section~\ref{sec:weird} we provide
examples of such exceptional tilings, under a mild commensurability condition for such zonotopes.

\begin{theorem}\label{th:character}
Suppose  a polytope $P$ tiles $\R^3$ with multiplicity by translations over a multiset $\Lambda$
and condition \eqref{non-discrete-support}  holds. Then $P$ is a two-flat zonotope.
\end{theorem}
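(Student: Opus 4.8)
The plan is to feed a carefully chosen family of $4$-legged frames into condition \eqref{non-discrete-support}. Fix a nonzero vector $\bu$ spanning the line $l$. By \cite{gravin2011translational} the tiler $P$ is a zonotope, so after a translation we may assume $P=\sum_{i=1}^{N}[-\tfrac12\bv_i,\tfrac12\bv_i]$ is centered at $0$; since the segment $[-\bv_i/2,\bv_i/2]$ is unchanged when $\bv_i$ is replaced by $-\bv_i$, we may further assume $\langle\bv_i,\bu\rangle\ge 0$ for every $i$ and that the $\bv_i$ are pairwise non-parallel. Partition the indices into $A=\Set{i:\langle\bv_i,\bu\rangle=0}$ and $B=\Set{i:\langle\bv_i,\bu\rangle>0}$; the generators $\bv_i$ with $i\in A$ all lie in the plane $\bu^\perp$. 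It suffices to prove that $V:=\Span\Set{\bv_i:i\in B}$ has dimension at most $2$: then $P=\bigl(\sum_{i\in A}[-\tfrac12\bv_i,\tfrac12\bv_i]\bigr)+\bigl(\sum_{i\in B}[-\tfrac12\bv_i,\tfrac12\bv_i]\bigr)$ displays $P$ as a Minkowski sum of a (possibly degenerate) centrally symmetric polygon in $\bu^\perp$ and one in $V$, and $V\neq\bu^\perp$ since the second summand's generators are not orthogonal to $\bu$ --- this is exactly a two-flat zonotope.

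I will use the standard description of the faces of a zonotope. For linearly independent $\bv_a,\bv_b$ the vector $\mathbf{n}=\bv_a\times\bv_b$ is the outer normal of a facet $F_{ab}=\bc_{ab}+\sum_{\bv_i\perp\mathbf{n}}[-\tfrac12\bv_i,\tfrac12\bv_i]$ of $P$, with centre $\bc_{ab}=\tfrac12\sum_{\bv_i\not\perp\mathbf{n}}\operatorname{sign}\langle\bv_i,\mathbf{n}\rangle\,\bv_i$ and opposite facet $F_{ab}^-=-F_{ab}=F_{ab}-2\bc_{ab}$. If $\be$ is an edge direction of $F_{ab}$ and $(\be,\btau_1,\btau_2)$ is the associated $4$-legged frame, then $\btau_2=-2\bc_{ab}$ (the translation carrying $F_{ab}$ onto $F_{ab}^-$), and $\btau_1=\btau_1(F_{ab},\be)$, the vector joining the two $\be$--edges of the polygon $F_{ab}$, is a sum of those edge vectors of $F_{ab}$ lying cyclically between them; in particular $\langle\btau_1,\bu\rangle$ is a signed sum of the numbers $\langle\bv_i,\bu\rangle$ over exactly half of the edge directions of $F_{ab}$ distinct from $\be$.

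Now suppose for contradiction that $\dim V=3$ and let $C$ be the cone generated by $\Set{\bv_i:i\in B}$. Then $C$ is pointed: a relation $\sum\nu_i\bv_i=0$ with $\nu_i\ge 0$ gives $\sum\nu_i\langle\bv_i,\bu\rangle=0$, forcing every $\nu_i=0$. Hence $C$ is a pointed $3$-dimensional polyhedral cone, so it has a $2$-dimensional face, which is spanned by two of its extreme rays $\R_{\ge0}\bv_a,\R_{\ge0}\bv_b$ (with $a,b\in B$); orienting $\mathbf{n}=\bv_a\times\bv_b$ suitably, $\langle\bv_i,\mathbf{n}\rangle\ge 0$ for all $i\in B$ and $\langle\bv_c,\mathbf{n}\rangle>0$ for some $c\in B$. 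Put $H=\mathbf{n}^\perp=\Span(\bv_a,\bv_b)$ and consider the $4$-legged frame of $F:=F_{ab}$ with edge direction $\be=\bv_a$; I claim $\bu$ lies in none of $\be^\perp,\btau_1^\perp,\btau_2^\perp$, contradicting \eqref{non-discrete-support}. First, $\langle\be,\bu\rangle=\langle\bv_a,\bu\rangle>0$. Second, in $\langle\bc_{ab},\bu\rangle=\tfrac12\sum_{\bv_i\not\perp\mathbf{n}}\operatorname{sign}\langle\bv_i,\mathbf{n}\rangle\langle\bv_i,\bu\rangle$ the terms with $i\in A$ vanish, each surviving term has coefficient $+1$ (as $\langle\bv_i,\mathbf{n}\rangle>0$ there), and at least one survives (the $\bv_c$ term), so $\langle\btau_2,\bu\rangle=-2\langle\bc_{ab},\bu\rangle<0$. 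Third, the edges of $F$ have directions $\bv_i$ with $\bv_i\in H$: those with $i\in B$ have strictly positive $\bu$--component and $\bv_a,\bv_b$ are their two angular extremes, while there is at most one direction with $i\in A$, namely the line $H\cap\bu^\perp$. Listing these directions in angular order over a half-turn, with $\bv_a$ the first among the $B$--directions, the expression for $\btau_1(F,\bv_a)$ as the intervening edge-vector sum reduces to $\langle\btau_1,\bu\rangle=\sum_{i\in B,\ \bv_i\in H,\ i\neq a}\langle\bv_i,\bu\rangle$, which is positive because the term indexed by $b$ appears. This contradiction forces $\dim V\le 2$, so $P$ is a two-flat zonotope.

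I expect the only real technical obstacle to be the last point: pinning down the cyclic (angular) order of the edges of the facet $F_{ab}$ --- in particular the location of the at most one $A$--direction in that order --- and verifying that the signed half-sum defining $\langle\btau_1,\bu\rangle$ does not accidentally cancel. The two normalizations, arranging $\langle\bv_i,\bu\rangle\ge0$ and taking $\be=\bv_a$ to lie on an extreme ray of $C=\operatorname{cone}\Set{\bv_i:i\in B}$, are precisely what collapse that signed sum into a sum of strictly positive terms.
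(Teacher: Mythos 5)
Your proof is correct, but it takes a genuinely different route from the paper's. The paper argues geometrically: it takes the supporting plane $L\perp l$, sets $F=L\cap P$, and runs a case analysis on facets $G$ adjacent to $F$ (either $G$ is a parallelogram glued to $F$ along an edge, or $F$ joins $G$ to $G^-$); after ruling out the all-parallelogram (prism-like) configuration it finds a facet $G$ with $F$ connecting $G$ to $G^-$, and then kills the residual Minkowski summand $H$ in $P=F+G+H$ by comparing the width of $P$ in the direction $\Bf_\perp$ computed two ways. You instead work directly with the generators: normalizing $\langle\bv_i,\bu\rangle\ge 0$ and splitting them into $A=\Set{i:\langle\bv_i,\bu\rangle=0}$ and $B=\Set{i:\langle\bv_i,\bu\rangle>0}$, you observe that if $\Span\Set{\bv_i:i\in B}$ were $3$-dimensional then the pointed cone $C=\operatorname{cone}\Set{\bv_i:i\in B}$ would have a $2$-face, and the facet of $P$ spanned by its two extreme generators carries a $4$-legged frame with $\langle\be,\bu\rangle>0$, $\langle\btau_2,\bu\rangle<0$, and $\langle\btau_1,\bu\rangle>0$, contradicting \eqref{non-discrete-support}. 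The step you flag as delicate does check out: since $\operatorname{cone}(\bv_a,\bv_b)$ is a face of $C$, every $B$-generator lying in $H=\Span(\bv_a,\bv_b)$ sits in the angular sector between $\bv_a$ and $\bv_b$ (which cannot meet $\bu^\perp$, as a nonnegative combination of $B$-generators has positive inner product with $\bu$), so the signed sum defining $\langle\btau_1,\bu\rangle$ collapses to $\sum_{i\in B,\,\bv_i\in H,\,i\ne a}\langle\bv_i,\bu\rangle>0$, the at most one $A$-direction in $H$ contributing zero regardless of where it falls in the cyclic order. Your argument buys a cleaner, single-stroke contradiction with no case split and no separate treatment of the prism configuration, and it exhibits the two flats explicitly as $\bu^\perp$ and $\Span\Set{\bv_i:i\in B}$ (the same decomposition the paper reaches as $P=F+G$, since the generators of $F=L\cap P$ are exactly the $A$-generators); the paper's argument is more visual and does not require the face-normal bookkeeping for zonotopes. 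Both rest equally on \cite{gravin2011translational} for the fact that $P$ is a zonotope.
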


\begin{proof}

We let $L$ be a plane orthogonal to $l$ and supporting $P$; (\ref{non-discrete-support}) is then equivalent to
\beql{non-discrete-condition}
\forall \be, \btau_1, \btau_2, \text{ either } \be \parallel L, \text{ or } \btau_1 \parallel L,
  \text{ or } \btau_2 \parallel L.
\eeq

Let $F =  L\cap P$.
The dimension of the face $F$ can be $0$, $1$ or $2$.
Consider any facet $G$ of $P$ that has at least one common vertex with $F$, and let $e$ be an edge of $G$
that shares exactly one vertex $v$ with $F$ (so $G \neq F$).
Consider the $4$-legged frame determined by $G$ and $e$ with $\btau_1,\btau_2$
being the corresponding translation vectors.
Since $v$ is in $L$, by (\ref{non-discrete-condition}) one of the
three vertices $v+\be$, $v+\btau_1$ and $v+\btau_2$  lies in $L$, and therefore lies in $F$ as well.
By our choice of $\be$, $v+\be$ is a vertex of $G$ but not of $F$. Thus either $v+\btau_1\in F$,
or $v+\btau_2\in F$:
\begin{enumerate}
\item \label{case:tau1} If $v+\btau_1\in F$, then $\btau_1 \in G \cap F$, so we see that
$\btau_1$
is an edge of $G$.   Hence $G$ is a parallelogram.
\item \label{case:tau2}  If $v+\btau_2\in F$, then $F$ connects $G$ with its opposite face $G^-$.
\end{enumerate}
See Fig.\ \ref{fig:animals}.

\begin{figure}[h]
        \begin{subfigure}[b]{0.45\textwidth}
				        \label{fig:2flat_1}
								\centering
								\def\svgwidth{150pt}
								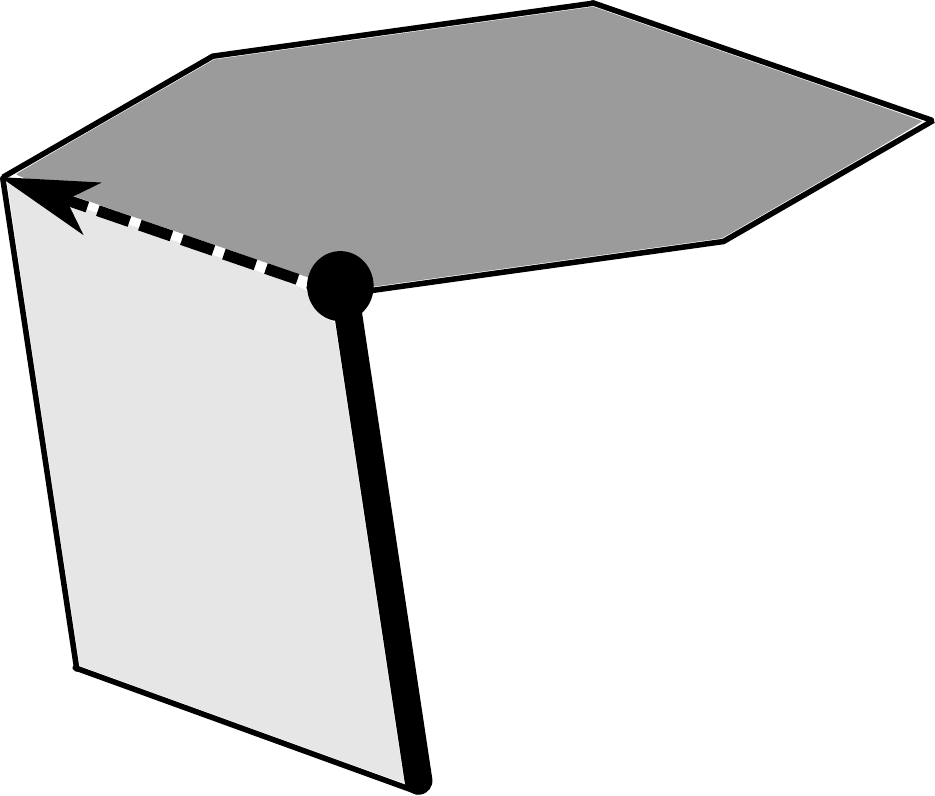
  				  		\caption{$v+\btau_1\in F.$}
        \end{subfigure}%
        \quad 
        \begin{subfigure}[b]{0.45\textwidth}
								\label{fig:2flat_2}
								\centering
								\def\svgwidth{150pt}
								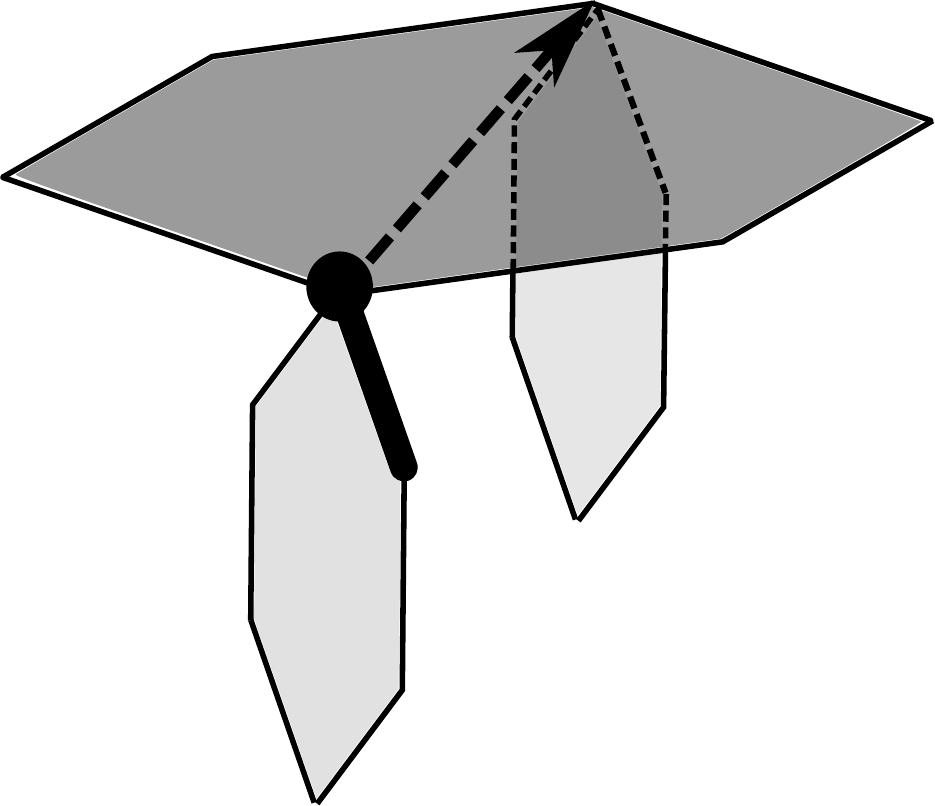
								\caption{$v+\btau_2\in F.$}
        \end{subfigure}
        ~ 
        \caption{The two possibilities for the facet $G$ with respect to $F$ }\label{fig:animals}
\end{figure}

It is our goal to find a facet $G$ which satisfies property \eqref{case:tau2}.
If, to the  contrary, every facet adjacent to $F$ satisfies property \eqref{case:tau1},
then each facet adjacent to $F$ is a parallelogram sharing an edge with $F$.
It follows that exactly three edges meet at every vertex of $F$, and all edges of these parallelograms
that are not edges of $F$ or parallel to $F$, are parallel to each other (see Fig.\ \ref{fig:prism}).
Now since $F$ is centrally symmetric,
consider two opposite parallel edges $e^+$ and $e^-$ of $F$, and corresponding parallelogram facets
$G$ and $G^-$. The facets $G$ and $G^-$ are parallel
and therefore opposite in $P$, therefore $G$  enjoys property \eqref{case:tau2}.
\begin{figure}[h]
\centering
\def\svgwidth{300pt}
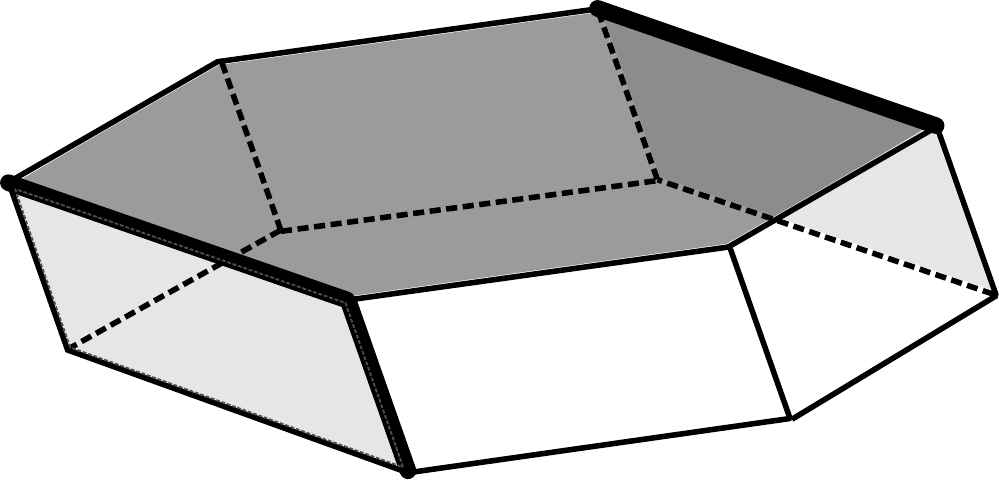
\caption{This is the case that each facet adjacent to $F$ is a parallelogram sharing an edge with $F$, giving us a prism}
\label{fig:prism}
\end{figure}

Now that we have found a facet $G$ such that $F$ connects $G$ with $G^-$,
we also note that since $P$ is centrally symmetric, $G$ also connects $F$ and $F^-$.
We will now show that $P=F+G$, under the minor assumption that $F$ and $G$ do not share an edge.  The case that $F$ and $G$ do in fact share an edge may be
handled in exactly the same manner, so without loss of generality
we assume throughout the rest of the proof that $F$ and $G$ share none of their edges.
Since $P$ is a zonotope, $P=F+G+H$ for some polytope $H$.
To arrive at a contradiction, we assume to the contrary that $H$ is not a single point,
and let $h_0$ be any edge of $H$.
Let $F=[f_1]+\dots+[f_k]$, $G=[g_1]+\dots+[g_\ell]$,
and $H=[h_0]+[h_1]+\dots+[h_m],$ where $k\ge 0$, $\ell\ge 2$, and $m\ge 0.$
We may assume that all line segments $f_i, g_i, h_i$ have the origin as their midpoint and thus
the center of $P$ is also at the origin.
We further consider a normal vector $\Bf_{\perp}$ to the face $F$ of $P$.
When $F$ is a $2$-dimensional face of $P$, $\Bf_{\perp}$
cannot be orthogonal to any line segment $h_i\in H$ and $g_i\in G$.
If $F$ is $0$ or $1$ dimensional face of $P$ (see figure~\ref{fig:FGframe}),
we have an infinite collection of perpendicular vectors to $F$ and we may choose
$\Bf_{\perp}$ to be not orthogonal to any line segment $h_i\in H$ and $g_i\in G$.
For each edge $g_i$ (resp. $h_i$) we define $\bg_i^+$ (resp. $\bh_i^+$) to be the vector from the origin to the endpoint of $g_i$ (resp. $h_i$) such that $\langle \bg_i^+ , \Bf_{\perp} \rangle>0$ (resp. $\langle \bh_i^+ , \Bf_{\perp} \rangle>0$). In the same way we define $\bg_i^-$ (resp. $\bh_i^-$) s.t.
$\langle \bg_i^- , \Bf_{\perp} \rangle<0$ (resp. $\langle \bh_i^- , \Bf_{\perp} \rangle<0$). Now one may easily see that
the location of the face $F$ in $\R^3$ is given by $[f_1]+\dots+[f_k]+\bg_1^{+}+\dots+\bg_\ell^{+} + \bh_0^{+}+\dots+\bh_m^{+}$ as a set of extremal points of the linear functional corresponding to $\Bf_{\perp}$. Similarly
the location of the face $F^-$ in $\R^3$ is given by $[f_1]+\dots+[f_k]+\bg_1^{-}+\dots+\bg_\ell^{-} + \bh_0^{-}+\dots+\bh_m^{-}.$ Therefore the distance between $F$ and $F^-$ is
\[
\langle
\Bf_{\perp},  \sum_{i=1}^{\ell}\bg_i^{+} + \sum_{i=0}^{m}\bh_i^{+} -\sum_{i=1}^{\ell}\bg_i^{-} - \sum_{i=0}^{m}\bh_i^{-}
\rangle > \langle
\Bf_{\perp},  \sum_{i=1}^{\ell}\left(\bg_i^{+} -\bg_i^{-}\right)
\rangle.
\]

\begin{figure}[h]
\centering
\def\svgwidth{300pt}
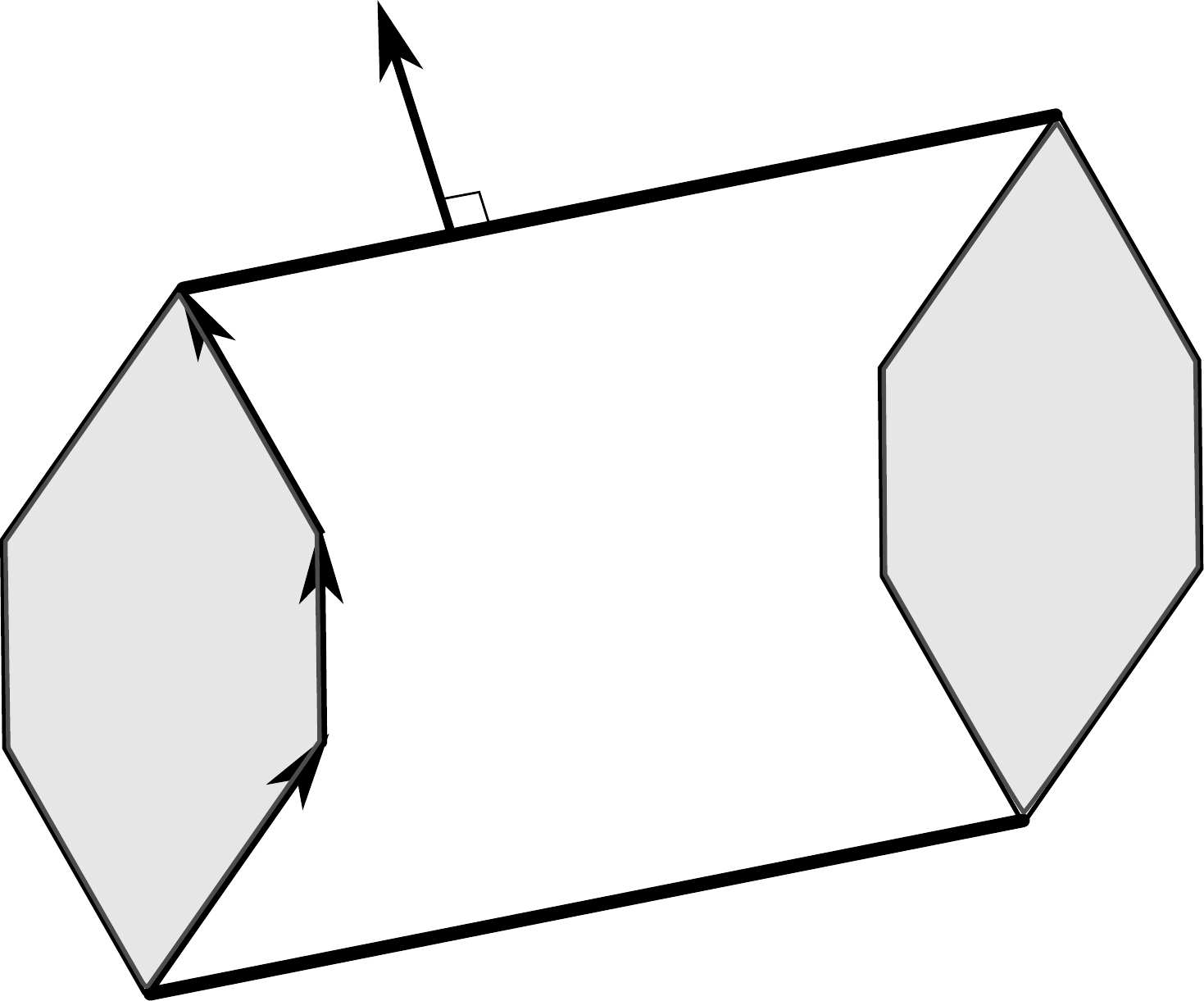
\caption{Here $F$ is a lower-dimensional face of $P$, namely an edge of $P$, and we see how we can get from the face $F$ to the face $F^-$ by walking along the vectors  $\bg_i^+ -   \bg_i^-$.   Here $\Bf_\perp$ is chosen to be a vector orthogonal to $F$ and not orthogonal to any of the line segments $h_j$.
}
\label{fig:FGframe}
\end{figure}

On the other hand, since $G$ connects $F$ and $F^-$ we have $F=F^-+\sum_{i\in I}\left(\bg_i^+-\bg_i^-\right),$ for a set $I$ of edges in $G$. The latter implies that the distance between $F$ and $F^-$ is not more than

\[
\langle
\Bf_{\perp},  \sum_{i=1}^{\ell}\left(\bg_i^{+} -\bg_i^{-}\right)
\rangle,
\]
a contradiction.

\end{proof}

\begin{remark}
One of $F$ or $G$ can be $1$-dimensional, in which case $P$ becomes a $3$-dimensional prism.
\end{remark}

\begin{main*}
Suppose a polytope $P$ $k$-tiles $\R^3$ with a multiset $\Lambda$, and suppose that
$P$ is not a two-flat zonotope.  Then $\Lambda$ is a finite union of translated lattices.
\end{main*}
\begin{proof}
If $P$ is not a two-flat zonotope, then due to  Theorem~\ref{th:character},
 condition \eqref{non-discrete-support} is violated.    Therefore, the intersection property
\eqref{intersection-property} in Corollary~\ref{cor:intersection-property} holds.
The Claim~\ref{cl:3d} now concludes the proof.
\end{proof}

\section{Many two-flat zonotopes have weird tiling sets}\label{sec:weird}

In this section we prove that, under a mild commensurability condition, two-flat zonotopes
admit tilings which are not quasi-periodic (``weird'').

\begin{theorem}\label{weirdtiling}
Suppose $P$ is a two-flat zonotope in $\R^3$ which is the Minkowski sum of the segments
$$
[\bv_1], \dots, [\bv_n], [\bw_1], \dots, [\bw_m],
$$
where $\bv_1, \ldots, \bv_n \in H_1$ and $\bw_1,\ldots,\bw_m \in H_2$
and $H_1$, $H_2$ are two different two dimensional subspaces.
Suppose also that the additive group generated by $\bv_1,\ldots,\bv_n,\bw_1,\ldots,\bw_m$ is discrete and that the
$\bv_j$ span $H_1$.

Then $P$ admits a tiling by translations at a set $\Lambda\subseteq\R^3$ which
is not a finite union of translated lattices.
\end{theorem}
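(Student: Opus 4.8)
The plan is to reduce the construction of $\Lambda$ to a two--dimensional tiling problem in which a parallelogram is the tile, since parallelograms admit non--quasi--periodic tilings (this is precisely the case excluded in the Kolountzakis theorem quoted in the introduction), and then to lift that two--dimensional tiling up along the line $\ell:=H_1\cap H_2$.

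Write $P=P_1+P_2$ with $P_1=\sum_j[\bv_j]\subseteq H_1$ and $P_2=\sum_i[\bw_i]\subseteq H_2$; since the $\bv_j$ span $H_1$, $P_1$ is a genuine $2$--dimensional polygon. By hypothesis the group $G$ generated by all the $\bv_j,\bw_i$ is discrete, and since $P$ is $3$--dimensional, $G$ is a full--rank lattice in $\R^3$. Pick a primitive generator $\be_0$ of the rank--one lattice $G\cap\ell$, and note $\be_0\in G\cap H_1$ and $\be_0\in G\cap H_2$. I would first record the auxiliary fact that a lattice zonotope multiple--tiles with respect to its generating lattice: if $Q=\sum_k[\bu_k]$ with all $\bu_k$ in a lattice $M$ and spanning $\Span M$, then $\delta_M*\one_Q$ is constant. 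This follows by computing the Fourier coefficients of $\delta_M*\one_Q$, namely $\ft{\one_Q}$ on the dual lattice $M^*$, through the dissection of $Q$ into parallelepipeds: the coefficient at a nonzero point of $M^*$ vanishes because, for each parallelepiped of the dissection, either one of its sine factors vanishes or its generating determinant does. In particular $P$ is a $k_0$--tiler of $\R^3$ with $G$ for some integer $k_0$, and $P_1$ is a $k_1$--tiler of $H_1$ with $G\cap H_1$.

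Let $\pi\colon\R^3\to\ell^\perp$ be the projection along $\ell$. Then $\bar P:=\pi(P)=\pi(P_1)+\pi(P_2)$ is the Minkowski sum of two segments lying in the two distinct lines $\pi(H_1)$, $\pi(H_2)$, hence a parallelogram; moreover, using $\be_0\in G\cap H_1$ and $\be_0\in G\cap H_2$, the images $\pi(G\cap H_1)$ and $\pi(G\cap H_2)$ are rank--one lattices, and one checks that each side of $\bar P$ is an integer multiple of the respective primitive vector, so $\bar P$ is a lattice parallelogram in $\bar G:=\pi(G)=G/\Z\be_0$. Set $\phi:=\delta_{\Z\be_0}*\one_P=\sum_{n\in\Z}\one_{P+n\be_0}$, a $\Z\be_0$--periodic nonnegative function; writing $G=\Z\be_0\oplus G'$ with $G'$ of rank two, the previous step gives $\delta_{G'}*\phi=k_0$. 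Now choose a non--quasi--periodic multiple tiling $\bar\Lambda$ of $\ell^\perp$ by $\bar P$: represent $\bar P$ as a parallelogram with two lattice sides, tile $\ell^\perp$ by lattice translates, and slide each ``row'' of this tiling by an arbitrary amount along one of the two sides; an offset sequence that is not eventually periodic yields a uniformly discrete $\bar\Lambda$ that is not a finite union of translated lattices. Lift $\bar\Lambda$ to a set $\widehat{\bar\Lambda}\subseteq\R^3$ and put $\Lambda:=\widehat{\bar\Lambda}+\Z\be_0$, so that $\delta_\Lambda*\one_P=\sum_{\bar\lambda\in\bar\Lambda}\phi(\,\cdot-\widehat{\bar\lambda}\,)$ and $\pi(\Lambda)=\bar\Lambda$. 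Granting that $\Lambda$ is a multiple tiling, it cannot be a finite union of translated lattices: if it were, say $\Lambda=\bigcup_{i}(M_i+t_i)$, then each $\pi(M_i+t_i)\subseteq\pi(\Lambda)=\bar\Lambda$ is a coset contained in a discrete set, so each subgroup $\pi(M_i)$ is discrete and $\bar\Lambda=\bigcup_i(\pi(M_i)+\pi(t_i))$ would be a finite union of translated lattices, a contradiction.

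What remains — and this is the main obstacle — is to choose the $\be_0$--components of the lifts $\widehat{\bar\lambda}$ so that $\delta_\Lambda*\one_P\equiv k_0$. In the prism case ($P_2$ a single segment $[\bw]$) this is easy, because $\phi=\one_{P_1+\R\bw}$ is genuinely invariant in the $\bw$--direction: each $\ell$--tube above a translate of $\bar P$ is tiled column by column with a completely free $\bw$--component, and a non--eventually--periodic choice of those components finishes the argument. In the general case $\phi$ is \emph{not} invariant along $\ell$ — the $\ell$--fibers of $P$ have varying, generally non--integer, lengths — so the lift cannot be carried out column by column. One must instead use that sliding a row of the $\bar P$--tiling along the chosen side of $\bar P$ corresponds, after lifting with an appropriate $\be_0$--component, to a symmetry of the lattice tiling $P+G$ coming from a translation by an element of $G\cap H_1$; this is where the hypothesis that the $\bv_j$ span $H_1$ is used, via the fact that $\delta_{G\cap H_1}*\one_P$ depends on its argument only modulo $G\cap H_1$ together with the coordinate transverse to $H_1$. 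Making this precise — and checking, row by row, via one--dimensional tiling identities, that the perturbed family still covers $\R^3$ at level $k_0$, and that $\Lambda$ has bounded density — is the technical heart of the argument, but is then routine.
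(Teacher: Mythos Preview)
Your strategy --- project along $\ell=H_1\cap H_2$, exploit the non--quasi--periodic tilings of the resulting parallelogram, and lift --- is appealing, but the lifting step has a genuine gap. The ``fact'' you invoke, that $\delta_{G\cap H_1}*\one_P$ depends only on the coordinate transverse to $H_1$, is \emph{false} in general. Take $H_1=\{z=0\}$, $H_2=\{y=0\}$, $\bv_1=(1,0,0)$, $\bv_2=(0,1,0)$, $\bw_1=(0,0,1)$, $\bw_2=(\tfrac12,0,1)$. Then $G\cap H_1=\tfrac12\Z\times\Z\times\{0\}$, and the slice of $P$ at height $z=\tfrac12$ is $[0,\tfrac54]\times[0,1]$; convolving its indicator with $\delta_{\frac12\Z\times\Z}$ gives a function of $x$ that takes both values $2$ and $3$. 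So translating a $G\cap H_1$--slab by an arbitrary element of $H_1$ does \emph{not} preserve the covering level, and the step you label ``routine'' cannot be carried out as described.

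What the paper actually proves is a strictly weaker identity that \emph{is} true and suffices. Writing $G_1=\langle\bv_1,\ldots,\bv_n\rangle$, one has, for arbitrary nonzero reals $c_j$,
\[
\one_P*\delta_{G_1}*(\delta_0-\delta_{c_1\bv_1})*\cdots*(\delta_0-\delta_{c_n\bv_n})=0,
\]
established by a direct Fourier computation using the parallelepiped decomposition of $P$ (each parallelepiped necessarily has at least one $\bv_j$ as a side, which kills $\ft{\one_{B_j}}$ on the relevant part of $\supp\ft{\delta_{G_1}}$). This does \emph{not} say that $\one_P*\delta_{G_1}$ is $H_1$--invariant --- only that its iterated mixed first differences in the $\bv_j$--directions vanish. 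From this identity the paper extracts two finite multisets $S,T\subseteq H_1$ with $\one_P*\delta_{G_1+S}=\one_P*\delta_{G_1+T}$, so that on each $G_1$--coset inside the full lattice $\Gamma$ one may freely swap the decoration $S$ for $T$ without disturbing the level; a single swap already produces a $\Lambda$ whose $\ft{\delta_\Lambda}$ contains an entire line in its support, hence is not a finite union of translated lattices. This swap mechanism is the missing idea, and it is not a routine consequence of your projection picture. (A secondary gap: you assume $G\cap\ell$ has rank one, but nothing in the hypotheses forces $\ell$ to be a rational direction for $G$ --- when the $\bw_i$ do not span $H_2$, the plane $H_2$ is not pinned down and $\ell$ can have irrational slope in $G\cap H_1$. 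The paper's construction never refers to $\ell$ at all.)
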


\begin{corollary}\label{cor:weird-tiling}
If $P \subseteq \R^3$ is a two-flat {\em rational}
zonotope then $P$ admits tilings
by sets which are not finite unions of translated lattices.
\end{corollary}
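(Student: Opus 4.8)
The plan is to obtain Corollary~\ref{cor:weird-tiling} as an essentially immediate consequence of Theorem~\ref{weirdtiling}: given a rational two-flat zonotope $P$, one only has to check the two hypotheses of that theorem, namely that the generating segments generate a discrete additive group, and that those generators lying in one of the two planes span that plane. So first I would write $P=[\bv_1]+\cdots+[\bv_n]+[\bw_1]+\cdots+[\bw_m]$ with $\bv_i\in H_1$, $\bw_j\in H_2$, where $H_1\ne H_2$ are $2$-dimensional subspaces of $\R^3$, translating $P$ so that every such segment has the origin as midpoint. One may assume $P$ is $3$-dimensional, since a lower-dimensional $P$ has Lebesgue measure $0$ in $\R^3$ and hence tiles $\R^3$ at no level; in particular the $\bv_i,\bw_j$ jointly span $\R^3$.

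The discreteness hypothesis is the easy half. Rationality of $P$ means that all the generators $\bv_i,\bw_j$ lie in $\Q^3$; taking a common denominator $N$, the additive group they generate is contained in $\tfrac1N\Z^3$ and is therefore discrete. (This is precisely where rationality, rather than the weaker commensurability hypothesis appearing in Theorem~\ref{weirdtiling}, gets used.)

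The remaining step is to arrange that the generators assigned to one plane actually span it, which amounts to dealing with prisms. If the $\bv_i$ already span $H_1$, nothing is needed. Otherwise $[\bv_1]+\cdots+[\bv_n]$ is a segment along some line $\ell$ through the origin (it is nonzero, else $P\subseteq H_2$ would be $2$-dimensional), so since $P$ is $3$-dimensional the $\bw_j$ must span a plane, necessarily $H_2$, with $m\ge 2$, and $\ell\not\subseteq H_2$. Fixing one generator $\bw_1$, we have $\bw_1\notin\ell$, hence $H_1':=\Span(\ell,\bw_1)$ is a $2$-plane, and $H_1'\ne H_2$ since $H_1'$ contains the direction of $\ell$. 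Re-grouping $P=\bigl([\bv_1]+\cdots+[\bv_n]+[\bw_1]\bigr)+\bigl([\bw_2]+\cdots+[\bw_m]\bigr)$ presents $P$ as a two-flat zonotope with respect to $H_1'$ and $H_2$ in which the first group of segments spans $H_1'$, while the group they all generate is unchanged (hence still discrete). The degenerate configuration in which $H_1$ and $H_2$ share a segment is handled verbatim by listing that segment in both groups. With this relabelling both hypotheses of Theorem~\ref{weirdtiling} hold, and it furnishes a tiling of $\R^3$ by a set $\Lambda$ that is not a finite union of translated lattices.

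I expect the only genuine obstacle here to be the last reduction — verifying that one can always choose a two-flat decomposition whose generators of one plane span that plane, i.e.\ correctly treating prisms and the shared-segment cases; the discreteness check and the appeal to Theorem~\ref{weirdtiling} are routine.
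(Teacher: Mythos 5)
Your proposal is correct and is exactly the route the paper intends: the corollary is stated without proof as an immediate consequence of Theorem~\ref{weirdtiling}, with rationality giving discreteness of the generated group via a common denominator. Your extra care in re-grouping the segments so that the first family spans its plane (the prism/degenerate case) is a detail the paper glosses over, and your treatment of it is sound.
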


\begin{proof}[Proof of Theorem \ref{weirdtiling}]
We begin the analysis by noting that $P$
can be paved by  parallelepipeds, whose sides
are among the vectors $\bv_j$ and $\bw_j$ (proof is by induction on
the number of line segments whose Minkowski sum is the zonotope).
Therefore we can write its indicator function as a finite sum of
indicator functions of parallelepipeds.
\[
{\one}_P(x) = \sum_{j=1}^M  {\one}_{B_j}(x),\ \ \mbox{for a.e.\ $x$},
\]
where each $B_j$ is a parallelepiped, whose three sides are equal to some
of the $\bv_j$ and $\bw_j$.

Suppose now that the parallelepiped $B$ is centered at the origin and has sides parallel to the
three linearly-independent vectors $\ba, \bb, \bc$. We can write the indicator function of $B$
as a convolution
$$
\one_{B} = \frac{\Abs{\det(\ba,\bb,\bc)}}{\Abs{\ba}\cdot\Abs{\bb}\cdot\Abs{\bc}} \mu_{\ba}*\mu_{\bb}*\mu_{\bc},
$$
where $\mu_{\ba}$ is the measure that equals arc-length on the line segment from $-\ba/2$ to $\ba/2$,
and $\mu_{\bb}, \mu_{\bc}$ are similarly defined.
Since (see \S\ref{sec:intro})
$$
\ft{\mu_{\ba}}(\bxi) = \Abs{\ba} \frac{\sin{\pi\inner{\bxi}{\ba}}}{\pi\inner{\bxi}{\ba}}
$$
and similarly for $\ft{\mu_{\bb}}$, $\ft{\mu_{\bc}}$, we obtain the formula
\beql{ppft}
\ft{\one_B}(\bxi) = \Abs{\det(\ba,\bb,\bc)} \frac{\sin{\pi\inner{\bxi}{\ba}}}{\pi\inner{\bxi}{\ba}} \cdot
     \frac{\sin{\pi\inner{\bxi}{\bb}}}{\pi\inner{\bxi}{\bb}} \cdot
     \frac{\sin{\pi\inner{\bxi}{\bc}}}{\pi\inner{\bxi}{\bc}}.
\eeq
Each parallelepiped $B_j$ in the decomposition of $P$ is a translate of a parallelepiped of
the type $B$, above, with some of the vectors $\bv_j$, $\bw_j$ in place of $\ba, \bb, \bc$.
Hence the Fourier Transform $\ft{\one_{B_j}}$ has the same zeros as the Fourier Transform
of its centered translate and these are
$$
\fzero{\one_{B_j}} = \left((\Z')\ba^*+\ba^\perp\right) \cup
	\left((\Z')\bb^*+\bb^\perp\right) \cup \left((\Z')\bc^*+\bc^\perp\right),
$$
where $\Z'=\Z\setminus\Set{0}$ and again $\ba^*=\ba/\Abs{\ba}^2$ is the geometric inverse of $\ba$, etc.
Write now
$$
G = \langle \bv_1, \ldots, \bv_n \rangle
$$
for the additive subgroup (lattice) of $H_1$ generated by the $\bv_j$'s and $G^* \subseteq H_1$
for its dual lattice {\em in $H_1$}
\beql{tmp2}
G^* = \Set{\bu\in H_1:  \forall \bg \in G \quad \inner{\bu}{\bg} \in \Z}.
\eeq
We claim now that for each $j$
\beql{ppipedzeros}
H_1^\perp + \left( G^*\setminus(\bv_1^\perp \cup \ldots \cup \bv_n^\perp) \right)  \ \subseteq\ \fzero{\one_{B_j}}.
\eeq
This follows since at least one side of $B_j$ is equal to a vector $\bv_j$
which makes the corresponding factor in \eqref{ppft} vanish on any element of $G^*$ which is not orthogonal to
$\bv_j$.
And since that factor in $\eqref{ppft}$ is constant along $H_1^\perp$ we obtain the claim.
Since \eqref{ppipedzeros} holds for all $j$ we obtain
\beql{pfzeros}
H_1^\perp + \left( G^*\setminus(\bv_1^\perp \cup \ldots \cup \bv_n^\perp) \right) \ \subseteq\ \fzero{\one_{P}}.
\eeq

Pick now any non-zero $c_1, c_2, \ldots, c_n \in \R$. We claim that the measure
\begin{equation}
\label{eq:zero}
\tau := \one_P *\delta_G * \left[\delta_0 - \delta_{c_1 \bv_1}\right] * \dots
               *  \left[\delta_0 - \delta_{c_n \bv_n}\right] = 0,
\end{equation}
where $\delta_G = \sum_{\bg\in G} \delta_{\bg}$.
For this it is enough to show that the Fourier Transform of the above measure
$$
\ft{\tau}(\bxi) = \ft{\one_P}(\bxi) (1-e^{2\pi i c_1\inner{\bv_1}{\bxi}})\cdots
	(1-e^{2\pi i c_n\inner{\bv_n}{\bxi}}) \ft{\delta_G}
$$
is identically $0$.
By the Poisson Summation Formula (the Fourier Transform is taken in the sense of
distributions)
\beql{poisson}
\ft{\delta_L} = \frac{1}{\vol{L}} \delta_{L^*},
\eeq
for each lattice $L=A\Z^d$ in $\R^d$ and dual lattice $L^*=A^{-\top}\R^d$ (where $A \in GL(d,\R)$),
it follows that $\ft{\delta_G}$ is a {\em measure}
with support on the lines orthogonal to $H_1$ that
go through the points of $G^*$:
$$
\supp{\ft{\delta_G}} = G^* + H_1^\perp.
$$
By \eqref{pfzeros} the function $\ft{\one_P}(\bxi)$ kills $\ft{\delta_G}$ except at the
lines of the form $\bg^*+H_1^\perp$ with $\bg^* \in G^*$
is orthogonal to some $\bv_j$. But at these lines one of the factors
$$
(1-e^{2\pi i c_1\inner{\bv_1}{\bxi}})\cdots
        (1-e^{2\pi i c_n\inner{\bv_n}{\bxi}})
$$
vanishes, so indeed $\ft{\tau}$ is zero.
Now rewrite the measure $(\delta_0-\delta_{c_1\bv_1})\cdots(\delta_0-\delta_{c_n\bv_n})$ in the form
$$
\sum_{k=1}^N \delta_{\bu_k^+} - \sum_{k=1}^N \delta_{\bu_k^-},\ \ \ (\mbox{where $N=2^{n-1}$}).
$$
Equivalently, we can rewrite \eqref{eq:zero} as the equality
\begin{equation}
\label{eq:slab_equality}
\one_P * \delta_G * \sum_{k=1}^N \delta_{\bu_k^+} = \one_P * \delta_G * \sum_{k=1}^N \delta_{\bu_k^-}.
\end{equation}
Define the multisets
\beql{st}
S = G+\Set{\bu_1^+,\ldots,\bu_N^+} \mbox{ and } T = G+\Set{\bu_1^-,\ldots,\bu_N^-}
\eeq
whose ground sets are the supports of the discrete measures
$$
\delta_G * \sum_{k=1}^N \delta_{\bu_k^+} \mbox{ and }
\delta_G * \sum_{k=1}^N \delta_{\bu_k^-},
$$
and their multiplicities at each point are those described by these measures.

In what follows we exploit \eqref{eq:slab_equality}
to give an example of a multiple tiling by $P$ with a {\bf discrete
set $\Lambda$}, which by no means can be expressed as a finite union of
translated lattices.

We notice first that since $P$ is a zonotope decomposing into parallelepipeds
of sides among the vectors $\bv_j, \bw_j$, it $k$-tiles $\R^3$, for some $k$, with the lattice
$$
\Gamma = \langle \bv_1, \ldots, \bv_n, \bw_1, \ldots, \bw_m \rangle
$$
generated by the $\bv_j, \bw_j$.
The reason is that each of the parallelepipeds $B_j$ tiles with a subgroup of $\Gamma$ (the group
generated by its side vectors) and therefore it tiles multiply with $\Gamma$ itself.
Clearly, $P$ also $(Nk)$-tiles $\R^3$ by the union of
$N$ translations of the lattice $\Gamma$ by the vectors $\bu_1^+,\dots,\bu_N^+$.

Let $\Set{\gamma_j:\ j\in\Z}$ be a complete set of coset representatives of $G$ in $\Gamma$.
Define the set of translates
$$
\Lambda = \bigcup_{j \in \Z} ( E_j + \gamma_j ),
$$
where for each $j\in\Z$ we choose $E_j = S$ or $E_j = T$ arbitrarily.
We claim that for any such choice of the $E_j$ the $\Lambda$-translates of $P$ form a $(Nk)$-tiling of $\R^3$.
Indeed the claim is true if all $E_j=S$ as it is a restatement of the fact that $P$ $(Nk)$-tiles with
$\Gamma+\Set{\bu_1^+,\ldots,\bu_N^+}$,
which itself follows from the fact that $P$ $k$-tiles with $\Gamma$.
Observe now that if we change any single $E_j$ from $S$ to $T$ we are adding the quantity
\beql{correction}
\one_P * \delta_G * \sum_{i=1}^N\delta_{\bu_i^-} * \delta_{\gamma_j} -
     \one_P * \delta_G * \sum_{i=1}^N\delta_{\bu_i^+} * \delta_{\gamma_j}
\eeq
to the constant function
$$
\one_P*\delta_{\Lambda},
$$
which therefore remains the same since \eqref{correction} is identically $0$ by \eqref{eq:slab_equality}.
We conclude that we have a $(Nk)$-tiling no matter how the $E_j$ are chosen
(one has to make the remark here that in any given bounded region of space
the fact that $P+\Lambda$ is a $(Nk)$-tiling
or not is affected by finitely many choices for the $E_j$).

Choose now all $E_j=S$ with the exception of $E_0=T$.
We claim that the corresponding set $\Lambda$ is not a finite union of translated fully-dimensional lattices.
Indeed, by the Poisson Summation Formula \eqref{poisson} we have that,
if

$$
\Lambda' = \bigcup_{j\in\Z} \left( S+\gamma_j \right) = \Gamma + \Set{\bu_1^+,\dots,\bu_N^+}
$$
then $\ft{\delta_{\Lambda'}}$
is a discrete measure in $\R^3$ and this should also be true for $\ft{\delta_\Lambda}$ if $\Lambda$
too were a finite union of translated lattices.
Thus the difference
$$
\ft{\delta_{\Lambda'}} - \ft{\delta_\Lambda}
$$
would also be a discrete measure.
But
\begin{align*}
\delta_{\Lambda'}-\delta_{\Lambda} & = \delta_{S+\gamma_0} - \delta_{T+\gamma_0}\\
 & = \delta_{\gamma_0} * \delta_G * \sum_{i=1}^N \left( \delta_{\bu_i^+}-\delta_{\bu_i^-} \right) \\
 & = \delta_{\gamma_0} * \delta_G * (\delta_0-\delta_{c_1\bv_1})*\cdots * (\delta_0 - \delta_{c_n\bv_n}).
\end{align*}
so
\beql{tmp3}
\ft{\delta_{\Lambda'}} - \ft{\delta_{\Lambda}} =
  e^{2\pi i \inner{\gamma_0}{\bxi}} \prod_{j=1}^n (1-e^{2\pi i c_j\inner{\bv_j}{\bxi}}) \ft{\delta_G}.
\eeq
Recall now that the support of the measure $\ft{\delta_G}$ are all straight lines orthogonal to $H_1$
passing through a point of $G^*$, the dual lattice to $G$ in $H_1$.
The factors in the right hand side of \eqref{tmp3} vanish at the set
\beql{parallel-planes}
\bigcup_{j=1}^n \left( \Z \frac{\bv_j^*}{c_j} + \bv_j^\perp \right).
\eeq
Each set in this union consists of a series of planes normal to $\bv_j$ and spaced by a length of
$\left(c_j \Abs{\bv_j}\right)^{-1}$.
Each of the straight lines that make up the support of $\ft{\delta_G}$ is parallel to each such plane and,
therefore, each such line is either entirely contained in \eqref{parallel-planes} or is disjoint from it.
It follows that, since the right-hand side of \eqref{tmp3} is not identically zero, its support
contains at least one straight line of the direction $H_1^\perp$ and is not a discrete set, as we had to show.
\end{proof}

\begin{remark} One may easily extend the previous construction of $\Lambda$ to the examples that cannot be expressed as a linear combination of finitely many
possibly lower-dimensional lattices.
\end{remark}
\begin{proof}
In the previous construction of $\Lambda$ we could let $E_j$ to be either $S$, or $T$ for each $j$ and still get a legitimate $(Nk)$-tiling of $\R^3$.
In general we could have a big family (of cardinality $2^{\Z}$) of possible $Nk$-tilings of $\R^3$.
We call a tiling {\bf weird} if it is not quasi-periodic. In what follows we show that our big family
has a weird member $\Lambda^{\dag}$. In our construction we will need the following claim.
\begin{claim}\label{coloring}
The set of integers $\Z$ can be colored with two colors in such a way that every arithmetic progression has
infinitely many numbers of each color.
\end{claim}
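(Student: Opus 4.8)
The plan is a diagonalization argument that plays the countability of the set of arithmetic progressions against the infinitude of each one of them. First I would enumerate all infinite arithmetic progressions of integers as a sequence $P_1,P_2,P_3,\ldots$; this is legitimate because each such progression is determined by a pair $(a,d)\in\Z\times\Z_{>0}$ (first term and common difference, say), and $\Z\times\Z_{>0}$ is countable. I would also fix, once and for all, a function $\phi\colon\N\to\N$ attaining each value infinitely often, for instance the sequence $1,\ 1,2,\ 1,2,3,\ 1,2,3,4,\ldots$.

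Next I would construct the coloring $c\colon\Z\to\Set{0,1}$ in stages $t=1,2,\ldots$, maintaining the invariant that before stage $t$ only finitely many integers have been assigned a color. At stage $t$ put $k=\phi(t)$; since $P_k$ is infinite and only finitely many integers are colored so far, $P_k$ contains at least two integers not yet colored, so I may pick two of them and color one of them $0$ and the other $1$. When the stages are exhausted I color every remaining integer $0$ (any fixed choice works). Colors assigned at a stage are never revised later.

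It then remains to check the progression condition, which is immediate: given an arithmetic progression $P_k$, the set $\Set{t:\phi(t)=k}$ is infinite, and at each such stage a previously uncolored element of $P_k$ received color $0$ and another received color $1$; these elements retain their colors, so $P_k$ meets both color classes infinitely often. The single point that deserves a word of justification — the nearest thing to an obstacle, though it is really a triviality — is the verification that the construction never stalls: at stage $t$ at most $2(t-1)$ integers are colored while $P_k$ is infinite, so two uncolored elements of $P_k$ are always available. Note that nothing about the arithmetic structure of the $P_k$ is used beyond their being infinite and countably many in number, so the same argument two-colors any countable family of infinite subsets of $\Z$ meeting each member of the family in both colors infinitely often.
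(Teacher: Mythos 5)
Your proposal is correct and is essentially the same argument as the paper's: both enumerate the countably many progressions so that each one is revisited infinitely often (the paper by repeating each progression infinitely often in the enumeration, you via the auxiliary function $\phi$), and at each stage color two fresh elements of the current progression with opposite colors, which is possible since only finitely many integers are colored so far. Your added remarks on non-stalling and on the generalization to arbitrary countable families of infinite sets are fine but not a different method.
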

\begin{proof}
There are countably many arithmetic progressions in $\Z$. We enumerate them all denoting $A_i$ the the $i$'th progression in the enumeration,
such that any progression appears infinitely many times. We begin to color $\Z$ step by step in such a way, that on the $n$'th step all progressions $A_i$ for $i$ from $1$ to $n$ have numbers of both colors.
At the $n$'th step we find two numbers of $A_n$ that are not yet colored, and color them differently. It is always possible to do so, because at step $n$ only finitely many numbers of $\Z$ are already colored, and $A_n$ has infinitely many numbers. With such a coloring every arithmetic progression would contain infinitely many integers of each of the colors.
\end{proof}

We can pick our complete set of the coset representatives $\Set{\gamma_j:\ j\in\Z}$ so that it contains $\gamma_1\cdot\Z$  as a subset.

In order to construct $\Lambda^{\dag}$, we consider a coloring of $\gamma_1\cdot\Z$ with two colors (red and black) so that every arithmetic progression there has infinitely many red and infinitely many black members.
We let $E_j=S$ if corresponding coset representative $\gamma_j\notin \gamma_1\cdot\Z$. For coset representatives in $\gamma_1\cdot\Z$, if the point $\gamma_1\cdot j$ is red we choose $E_j=S$, if the point $\gamma_1\cdot j$ is black we choose $E_j=T$ in $\Lambda^{\dag}$.

We further notice that due to the freedom to choose $c_k$'s in the definition of
$\bu^{-}_{k}$'s and $\bu^{+}_{k}$'s, one can pick $c_k$'s so that multisets $S$ and $T$
have different multiplicities at $0$. Indeed, we may pick $c_k$'s so that for any set of indexes $I\subset[n]$ the corresponding linear combination $\sum_{k\in I} c_k\cdot \bv_k\notin G$. Then $G+\sum_{k=1}^{N}\bu^{+}_{k}$ has $0$ at multiplicity $1$, while $G+\sum_{k=1}^{N}\bu^{-}_{k}$ has $0$ at multiplicity $0$.
Furthermore, at each point $\gamma_1\cdot\ell$ of $\gamma_1\cdot\Z$ the multisets
$S+\ell\cdot\gamma_1$ and $T+\ell\cdot\gamma_1$ have different multiplicities as well.
Thus we get irregular behavior of $\Lambda^{\dag}$ on the line $\Z\cdot\gamma_1$.
In particular, $\Lambda^{\dag}$ simultaneously contains and misses infinitely many members of each infinite coset of $\Z\cdot\gamma_1$.

Now if we assume that $\Lambda^{\dag}$ may be expressed as a finite linear combination of translated lattices $\delta_{_{\Lambda^{\dag}}}=q_1\cdot\delta_{_{\Lambda_1}} + \ldots +q_m\cdot\delta_{_{\Lambda_m}}$ (to simplify notations, we will write $q_1\cdot\Lambda_1 + \ldots +q_m\cdot\Lambda_m$ instead of
$\delta_{_{\Lambda^{\dag}}}$), then
$$
\Lambda^{\dag}\cap\Z\cdot\gamma_1  = \sum_{i=1}^m q_i\cdot\left(\Lambda_i\cap\Z\cdot\gamma_1\right).
$$

Each $\Lambda_i\cap\Z\cdot\gamma_1$ is a coset of $\Z\cdot\gamma_1$. Therefore, $\Lambda_i\cap\Z\cdot\gamma_1$ is either empty, or
is a single point, or is an arithmetic progression in $\Z\cdot\gamma_1$ with the common difference $d_i$. We denote the set of all the indices of the
latter $\Lambda_i$'s by $M\subset\{1,\dots,m\}$.
We further consider an arithmetic progression $A$ of $\Z\cdot\gamma_1$ with the common
difference $\prod_{i\in M} d_i$. We notice that for any $i\in M$ either $A\cap\Lambda_i=A$, or $A\cap\Lambda_i=\emptyset$.
Since $A\subset \Z\cdot\gamma_1$, we have

$$
\Lambda^{\dag}\cap A = \sum_{i=1}^m q_i\cdot\left(\Lambda_i\cap A\right)=\sum_{i\notin M}q_i\cdot\left(\Lambda_i\cap A\right)+
A\cdot\sum_{\substack{i\in M:\\ \Lambda_i\cap A\neq\emptyset}}q_i.
$$

According to the definition of $M$ the set $\sum_{i\notin M}q_i\cdot\delta_{_{\Lambda_i\cap A}}$ has finite support.
Since $A$ is an arithmetic progression in $\Z\cdot\gamma_1$ and due to our construction of $\Lambda^{\dag}$, the support of $$\delta_{_{\Lambda^{\dag}\cap A}} -
\delta_{_A}\cdot\sum_{\substack{i\in M:\\ \Lambda_i\cap A\neq\emptyset}}q_i
$$
cannot be finite, a contradiction.
\end{proof}

\bibliographystyle{abbrv}
\bibliography{Bibliography_Short}

\end{document}